\newtheorem{thm}{Theorem}[section]
\newtheorem{lem}{Lemma}[section]
\newtheorem{rem}{Remark}[section]
\numberwithin{equation}{section}
\title[Recovering polyhedral obstacles]{ Recovering a polyhedral obstacle by a few backscattering measurements}
\author{Jingzhi Li}
\address{Faculty of Science, South University of Science and
Technology of China, 518055 Shenzhen, P.~R.~China.}
\email{li.jz@sustc.edu.cn}
\author{Hongyu Liu}
\address{Department of Mathematics, Hong Kong Baptist University,Kowloon Tong, Hong Kong SAR.\vspace*{-4mm}}
\address{\vspace*{-4mm}and}
\address{HKBU Institute of Research and Continuing Education, Virtual University Park, Shenzhen, P. R. China.}
\email{hongyu.liuip@gmail.com}
\begin{document}
\maketitle

\begin{abstract}

We propose an inverse scattering scheme of recovering a polyhedral obstacle in $\mathbb{R}^n$, $n=2,3$, by only a few high-frequency acoustic backscattering measurements. The obstacle could be sound-soft or sound-hard. It is shown that the modulus of the far-field pattern in the backscattering aperture possesses a certain local maximum behavior, from which one can determine the exterior normal directions of the front sides/faces. Then by using the phaseless backscattering data corresponding to a few incident plane waves with suitably chosen incident directions, one can determine the exterior unit normal vector of each side/face of the obstacle. After the determination of the exterior unit normals, the recovery is reduced to a finite-dimensional problem of determining a location point of the obstacle and the distance of each side/face away from the location point. For the latter reconstruction, we need make use of the far-field data with phases. Numerical experiments are also presented to illustrate the effectiveness of the proposed scheme.

\medskip

\noindent{\bf Keywords}. Inverse scattering, polyhedral obstacle, backscattering, phaseless

\medskip

\noindent{\bf Mathematics Subject Classification (2010)}:  78A46, 35Q60.

\end{abstract}

\section{Introduction}

This work concerns the inverse scattering problem of recovering an impenetrable obstacle by the corresponding acoustic wave detection. The problem has its physical origin in radar/sonar, geophical exploration, non-destructive testing and medical imaging (cf. \cite{CK,Isa2}). Let $D$ be a bounded Lipschitz domain in $\mathbb{R}^n$, $n=2,3$, such that $\mathbb{R}^n\backslash\overline{D}$ is connected. $D$ represents an impenetrable obstacle located in the space and it is assumed to be unknown/inaccessible. In order to identify $D$, one sends a time-harmonic detecting plane wave of the form
\begin{equation}\label{eq:planewave}
u^i(x)=e^{ix\cdot \xi},\quad \xi\in\mathbb{R}^n,\ \ \xi\neq 0,
\end{equation}
which is an entire solution to the Helmholtz equation $(-\Delta-|\xi|^2) u=0$ in $\mathbb{R}^n$. The presence of the obstacle $D$ interrupts the propagation of the plane wave, leading to the so-called scattered wave field $u^s$, which exists only in the exterior of the obstacle. The total wave field $u=u^i+u^s$ satisfies the following Helmholtz system
\begin{equation}\label{eq:Helm}
\begin{split}
& (-\Delta-|\xi|^2) u=0\quad\mbox{in\ \ $\mathbb{R}^n\backslash \overline{D}$},\quad \mathcal{B} u=0\quad\mbox{on\ \ $\partial D$},\\
&\lim_{|x|\rightarrow+\infty}|x|^{\frac{n-1}{2}}\left(\frac{\partial u^s}{\partial |x|}-i|\xi| u^s\right )=0.
\end{split}
\end{equation}
In \eqref{eq:Helm}, $\mathcal{B}u:=u$ or $\mathcal{B}u:=\partial u/\partial \nu$, with $\nu\in\mathbb{S}^{n-1}:=\{x\in\mathbb{R}^n; |x|=1\}$ denoting the exterior unit normal vector to $\partial D$, corresponding to that $D$ is sound-soft or sound-hard, respectively. In the following, we set $k=|\xi|\in\mathbb{R}_+$ and $d=\xi/|\xi|\in\mathbb{S}^{n-1}$, denoting the wavenumber and incident direction of the plane wave, respectively. The PDE system \eqref{eq:Helm} is well-understood with $u\in H^1_{loc}(\mathbb{R}^n\backslash\overline{D})$ possessing the following asymptotic expansion (cf. \cite{CK,McL})
\begin{equation}\label{eq:asym}
u(x)=e^{ix\cdot \xi}+\frac{e^{ik|x|}}{|x|^{\frac{n-1}{2}}} u^\infty(\frac{x}{|x|})+\mathcal{O}\left(\frac{1}{|x|^{\frac{n+1}{2}}}\right),\quad |x|\rightarrow+\infty,
\end{equation}
which holds uniformly in $\hat x:=x/|x|\in \mathbb{S}^{n-1}$, where $x\in\mathbb{R}^n$ and $x\neq 0$. $u^\infty(\hat x)$ is known as the far-field pattern and we shall write $u^\infty(\hat x; \xi, D)=u^\infty(\hat x; k, d, D)$ to specify its dependence on the observation direction $\hat x$, wavenumber $k$ and incident direction $d$, as well as the obstacle $D$. $u^\infty(\hat x)$ is real-analytic in $\hat x$, and hence if it is known on any open patch of $\mathbb{S}^{n-1}$, then it is known on the whole sphere by the analytic continuation; see \cite{CK}.

The inverse problem that we are concerned with in the present paper is to recover $D$ by knowledge of $u^\infty$, which is known to be nonlinear and ill-posed (cf. \cite{CK,Isa2}). It is noted that the inverse problem is formally posed with a fixed $\xi\in\mathbb{R}^n$ and all $\hat x\in\mathbb{S}^{n-1}$. Hence, there is a widespread belief that one can recover $D$ by using the far-field pattern corresponding to a single incident plane wave $e^{ix\cdot \xi}$, which is referred to as a single far-field measurement. However, this still remains to be a longstanding problem with very limited progresses in the literature. If the obstacle is of small size; roughly speaking, smaller than half of the detecting wavelength, the unique recovery result was established in \cite{CS}. If the obstacle is extremely ``rough" in the sense that its boundary is nowhere analytic, the unique recovery result was established in \cite{HonNakSin}. If the obstacle is of general polyhedral type, the corresponding study is almost exclusive \cite{AR,ElsYam,Liu1}. Recently, some qualitative numerical schemes of recovering the obstacles by a single far-field measurement were proposed in \cite{Ammari4,LLSS,LLZ}, where certain restrictive a priori assumptions have to be imposed on the obstacles. Another challenging issue in the study of inverse scattering problems is the recovery by phaseless data, say the modulus of the far-field pattern, $|u^\infty(\hat x)|$. The only recent result we are aware of is \cite{Kli}, where the unique determination of a scattering potential by the phaseless far-field measurements was established.

In this paper, we develop a novel scheme of recovering a polyhedral obstacle by using only a few high-frequency far-field measurements. The obstacle could be sound-soft or sound-hard. The crux is the observation that the modulus of the far-field pattern in the backscattering aperture possesses a certain local maximum behavior, from which one can determine the exterior normal directions of the front sides/faces. Then by using the modulus of the backscattering data corresponding a few incident plane waves with suitably chosen incident directions, one can determine the exterior unit normal vector of each side/face of the obstacle. After the determination of the exterior unit normals, the recovery is reduced to a finite-dimensional problem of determining a location point of the obstacle and the distance of each side/face away from the location point. For the latter reconstruction, the far-field data with phases would be used.
Our study is based on the high-frequency asymptotics, namely the Kirchhoff or the physical optics approximation. However, our numerical experiments show that the high-frequency requirement could be relaxed to a certain extent. Moreover, in order to simplify the discussion, we focus on convex polyhedral obstacles in the present study. However, through our theoretical arguments, it can be expected that the method developed would work for non-convex obstacles, but under certain geometrical constraints. We focus on developing the novel inverse scattering scheme for convex obstacles in the present study and leave the technical and tedious derivation of the geometrical conditions for non-convex obstacles for a forthcoming work.

The rest of the paper is organized as follows. In Section 2, we consider the physical optics approximation on the high-frequency scattering from an admissible polyhedral obstacle, and derive the local maximum behavior of the modulus of the corresponding far-field pattern. In Section 3, we present the recovery scheme in detail. Section 4 is devoted to numerical experiments to validate the applicability and effectiveness of the proposed method and the paper is concluded in Section 5 with some discussion.

\section{Physical optics approximation}

Throughout the present section, we let $k\in\mathbb{R}_+$ and $d\in\mathbb{S}^{n-1}$ be fixed.
Let $D$ be a convex polygon in $\mathbb{R}^2$ or a convex polyhedron in $\mathbb{R}^3$, such that
\begin{equation}\label{eq:boundary}
\partial D=\bigcup_{j=1}^m C_j,
\end{equation}
where each $C_j$ represents an open side/face of $\partial D$, and shall be referred to as a {\it cell} in what follows. In the sequel, $D$ is referred to as a polyhedral obstacle. Let $\nu(x)\in\mathbb{S}^{n-1}$, $x\in \partial D$ denote the exterior unit normal vector to $\partial D$, and we set
\begin{equation}\label{eq:normal}
\nu_j:=\nu(x)\ \ \mbox{when}\ \ x\in C_j,\ \ j=1,2,\ldots, m.
\end{equation}
Clearly, $\nu_j$ is a constant unit vector.

Define
\[
\partial D^{+}:=\{x\in\partial D;\ \nu(x)\cdot d\geq 0\}\quad\mbox{and}\quad \partial D^{-}:=\{x\in\partial D;\ \nu(x)\cdot d< 0\}
\]
to be, respectively, the back-face and front-face of $\partial D$ with respect to the incident direction $d$.

Let $h_0$, $h_1$ and $h_2$ be fixed a priori positive constants. It is further assumed that
\begin{align}
(i)&~~k\cdot \mbox{diam}(D)\gg 1;\hspace{5cm} \empty\label{eq:cond1} \\
(ii)&~~\displaystyle{\min_{1\leq j\leq m} \mbox{diam}_{\mathbb{R}^{n-1}}(C_j)\geq h_0}; \label{eq:cond2}\\
(iii)&~~h_1\leq \displaystyle{\min_{1\leq j, j'\leq m,\ j\neq j'} \angle(\nu_j(y), \nu_{j'}(y)) }\leq h_2\ \ \mbox{for}\ \ y\in \partial D. \label{eq:cond3}
\end{align}
Roughly speaking, \eqref{eq:cond3} implies that the obstacle should not be very ``round" or ``sharp", and a generic condition which can guarantee this assumption is that the angle between any two adjacent cells is bounded below and above by certain constants (depending on the obstacle).  Assumption~(i) means that we are considering the scattering in the high-frequency regime. If a polyhedral obstacle $D$ satisfies the above three assumptions, then it is called an {\it admissible} obstacle with respect to the incident plane wave $e^{ikx\cdot d}$.

Denote
\[
\mathbb{S}^{n-1}_+:=\{\hat x\in\mathbb{S}^{n-1}; \ \hat x\cdot d\geq 0\}\quad\mbox{and}\quad \mathbb{S}^{n-1}_-:=\{\hat x\in\mathbb{S}^{n-1}; \ \hat x\cdot d<0\}
\]
the forward-scattering and backscattering apertures, respectively.

Let $C_j\subset\partial D^-$ be a front-cell of $\partial D$, and $\nu_j\in \mathbb{S}^{n-1}_-$ denote its unit normal vector pointing to the exterior of $D$.  Define $\hat x_j\in\mathbb{S}^{n-1}$ satisfying
\[
(d-\hat x_j)\parallel \nu_j
\]
to be the critical observation direction with respect to $d$ and $\nu_j$; see Fig.~1 for a 2D illustration. We note that one clearly has $\hat x_j\in\mathbb{S}_-^{n-1}$. It is directly calculated that the critical direction is given by
\begin{equation}\label{eq:cd}
\hat{x}_j=d-2(d\cdot \nu_j)\nu_j.
\end{equation}
On the other hand, for the subsequent use, we note that by using \eqref{eq:cd} and the fact that $d\cdot\nu_j<0$, one has by innerly producting both sides of \eqref{eq:cd} with $d$
\begin{equation}\label{eq:cd2}
d\cdot\nu_j=-\sqrt{\frac{1-\hat{x}_j\cdot d}{2}}.
\end{equation}
Hence, by combining \eqref{eq:cd} and \eqref{eq:cd2}, one further has
\begin{equation}\label{eq:cd3}
\nu_j=\frac{\hat x_j-d}{\sqrt{2(1-\hat{x}_j\cdot d)}}.
\end{equation}


\begin{figure}
\centering
\includegraphics[width=0.5\textwidth]{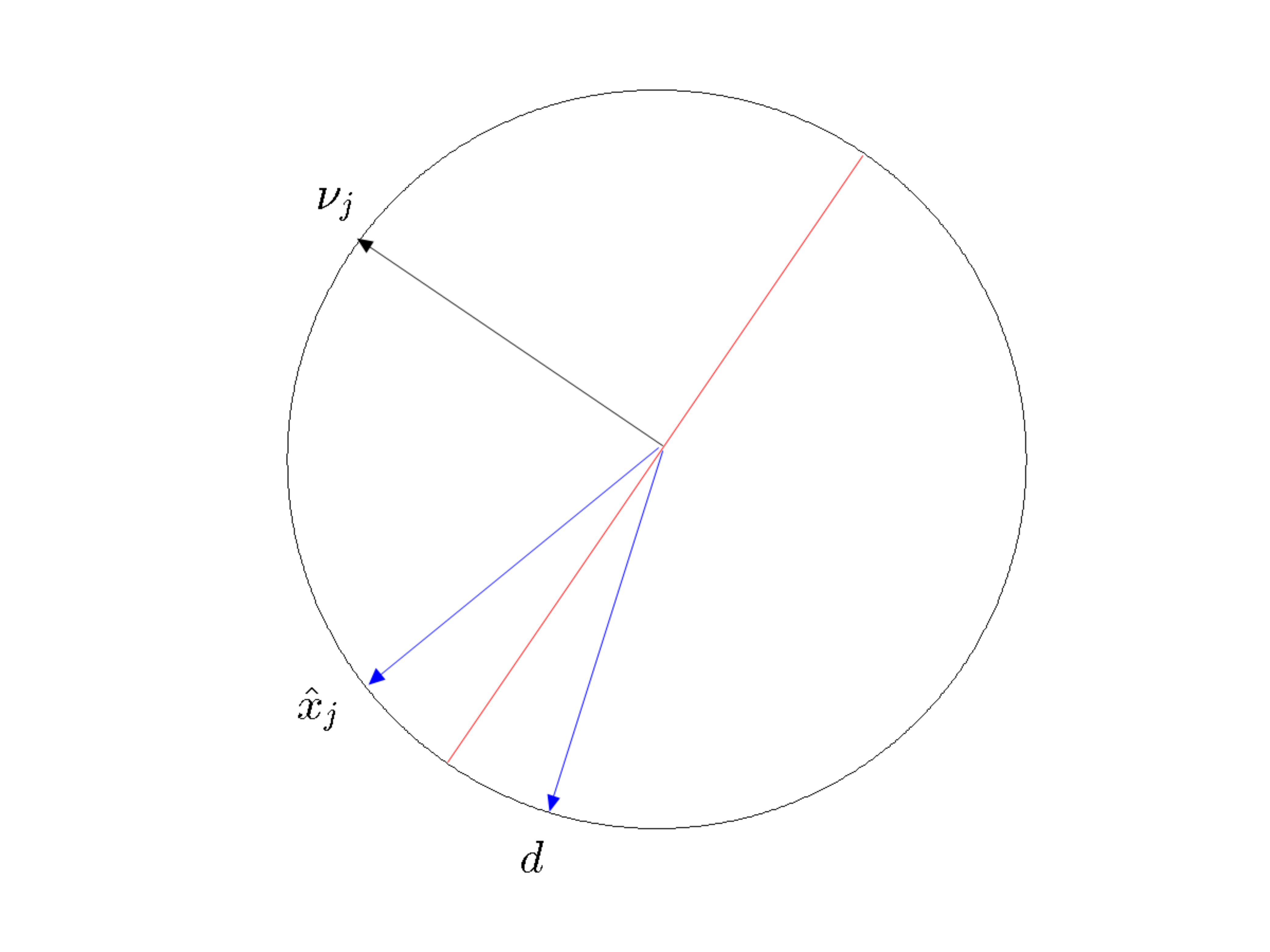}

\caption{\label{fig:1} 2D illustration of the relation between the incident direction $d$, the exterior unit normal vector $\nu_j$ and the critical observation direction $\hat{x}_j$. }
\end{figure}

\begin{figure}
\centering

\hfill{}\includegraphics[width=0.48\textwidth]{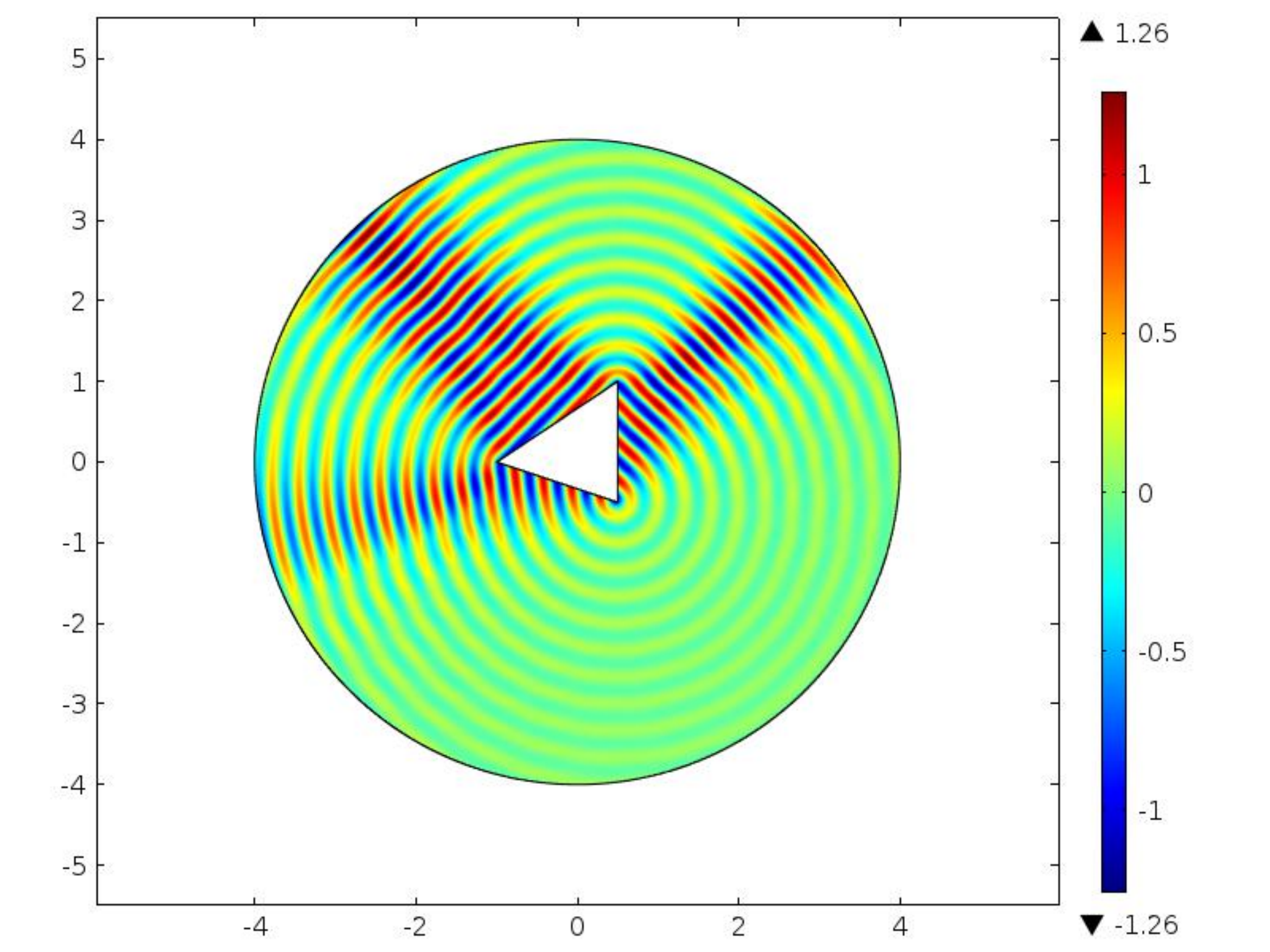}\hfill{} \includegraphics[width=0.48\textwidth]{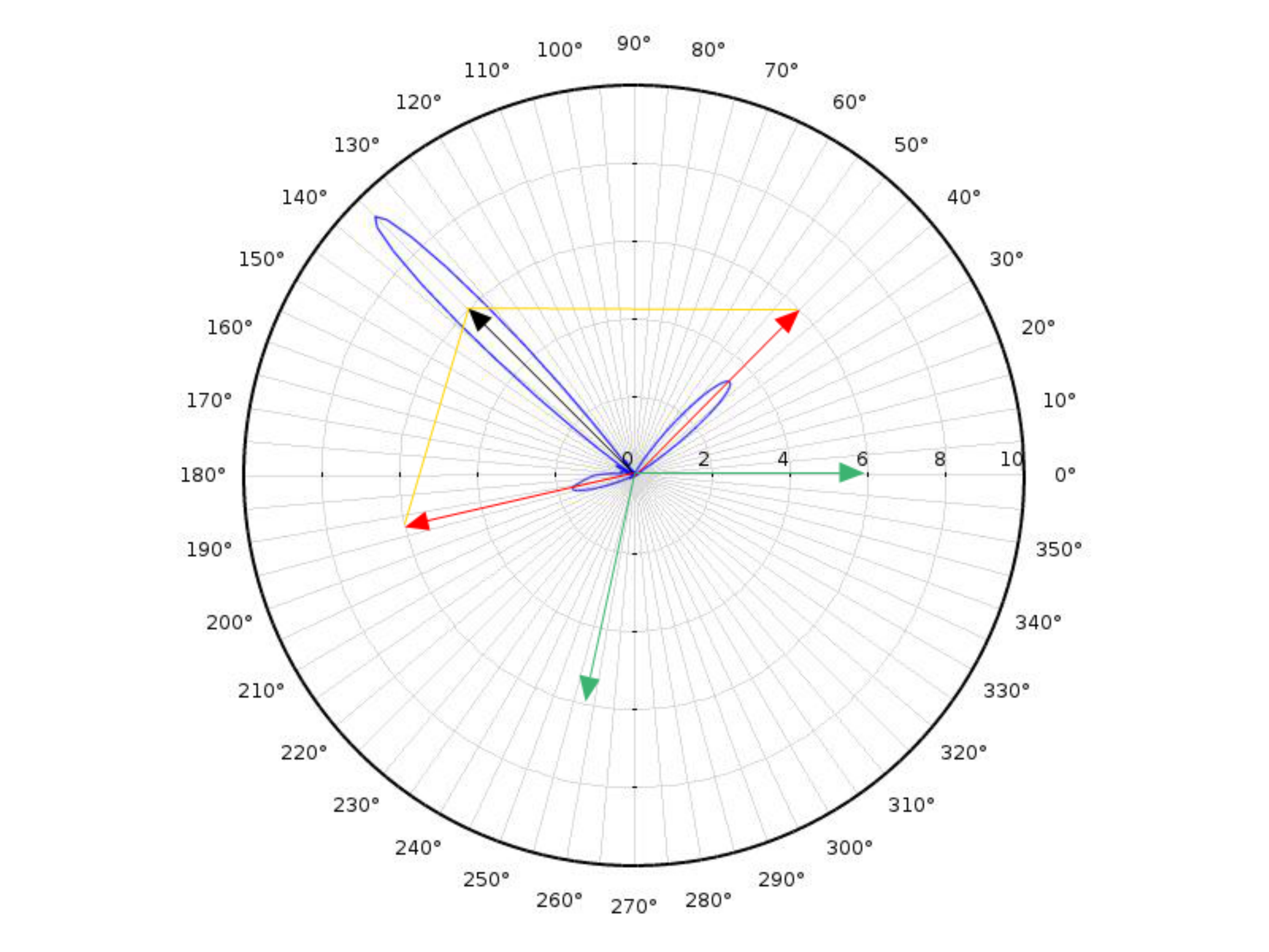}\hfill{}

\hfill{}(a)\hfill{}\hfill{}(b)\hfill{}

\caption{\label{fig:2} (a) Plot of the real part of the scattered wave $\Re(u^s(x))$ in the vicinity of a sound-soft triangular obstacle, and (b) polar graph of the square power of the phaseless far-field data $|u^\infty(\hat x)|^2$ in polar coordinates corresponding to the triangle due to an incident plane wave $e^{ikx\cdot d}$ with $d=(-\sqrt{2}/2, \sqrt{2}/2)$. The black arrow indicates both the incident and the forward-scattering directions, while the two red arrows represents the two critical observation directions. The green arrows indicates the exterior directions normal to the sides of the triangular obstacle.   }
\end{figure}

Next, we present the major result motivating the recovery scheme that we are going to develop in the next section. Before that, we give a numerical example by plotting the real part of the scattered wave field and the square power of the modulus of the associated far-field pattern corresponding to a sound-soft triangle due to an incident plane wave impinging from southeast to northwest; see Fig.~\ref{fig:2} for the illustration.

Clearly, one can see a certain local maximum behavior from Fig.~\ref{fig:2}(b). The observation direction (black arrow) associated with the largest magnitude of phaseless data points to the incident/forward-scattering direction, which gives no information of the obstacle. While the other two critical observation directions (red arrows) associated with the second and third largest magnitude of phaseless data provide profound information of unknown obstacle, which signify the major reflection angles due to the physical optics approximation in the high frequency scattering. The exterior unit normal
direction can thus be determined by connecting the arrow heads of incident and critical observation directions by yellow lines. Those unit vector (green arrows) parallel to the yellow lines are the desired normal directions of those sides of the triangular obstacle in the backscattering aperture as shown in Fig.~\ref{fig:2}(b). We shall present a rigorous mathematical justification for this local maximum behavior, which also forms the basis of our subsequent recovery scheme.

We first give some discussion on the Kirchhoff or the physical optics approximation (cf. \cite{CWL,CK,HLM,LaxPhi,Maj,MelTay}), which plays a key role in our recovery scheme. Considering the present study, it says that for the scattering from a convex polyhedral obstacle due to a high-frequency plane wave, the wave field near the boundary of the obstacle is composed of two parts: the contribution of the incident and reflected waves where they are present, and the contribution of the diffraction due to the corners and/or the edges of the obstacle. The Kirchhoff or the physical optics approximation takes the first contribution as the total wave field near the boundary of the obstacle. However, we would like to remark the study on rigorously justifying such approximation is still not fully understood and we refer to \cite{CWL} for an excellent account on the existing progress in the literature. For our study, we assume that the physical optics approximation holds true. 

Throughout the rest of the paper, we let a cell $C$ be parameterized as
\begin{equation}\label{eq:cell1}
\langle x, \nu \rangle=l,\quad x\in C,
\end{equation}
where $\nu\in\mathbb{S}^{n-1}$ is the unit normal vector to $C$ pointing to the exterior of the obstacle, and $l\geq 0$ denotes the distance between the origin and the line/plane containing $C$. Let $C^0$ denote the affine cell of $C$ defined by
\begin{equation}\label{eq:cell2}
\langle x, \nu\rangle=0,\quad x\in C^0.
\end{equation}
In the sequel, we let $\Pi_{C^0}$ denote the Euclidean reflection with respect to $C^0$. Now, we consider the scattering locally near a boundary cell, say $C_j$ due to a plane wave $e^{ikx\cdot d}$. Let
\begin{equation}\label{eq:k1}
v_j(x):=e^{ik(x-x_0)\cdot (\Pi_{C_j^0} d)}\cdot e^{ikx_0\cdot d},\quad 1\leq j\leq m,
\end{equation}
where $x_0\in C_j$ is a fixed point, and $x\in \mathbb{R}^n\backslash\overline{D}$.
It is easily verified that $v_j(x)$ satisfies $(\Delta+k^2)v_j=0$. Moreover, one can further verify that
\begin{equation}\label{eq:k2}
u^i(x)-v_j(x)=0\quad\mbox{and}\quad\frac{\partial (u^i+v_j)}{\partial\nu}(x)=0\quad\mbox{on}\ C_j.
\end{equation}
$v_j(x)$ is the reflected wave field of $u^i(x)$ with respect to the cell $C_j$. Using the physical optics approximation, we have

\begin{lem}\label{thm:ka}
Let $D$ be an admissible polyhedral obstacle with respect to the plane wave field $u^i=e^{ikx\cdot d}$, as described in \eqref{eq:boundary}--\eqref{eq:cond3}. Using the physical optics approximation, one has
\begin{equation}\label{eq:sa}
\frac{\partial u}{\partial \nu}(x)\approx\begin{cases}
& 2 \frac{\partial u^i}{\partial\nu}(x),\quad x\in {C}_j\subset\partial D^-, \ \ 1\leq j\leq m,\\
&\ \ \ 0,\qquad\ \, x\in C_{j'}\subset\partial D^+,\ \ 1\leq j'\leq m,
\end{cases}
\end{equation}
if $D$ is sound-soft; and
\begin{equation}\label{eq:ha}
u(x)\approx
\begin{cases}
& 2 u^i(x), \quad x\in {C}_j\subset\partial D^-, \ \ 1\leq j\leq m,\\
& \ \ 0,\qquad\ \, x\in C_{j'}\subset\partial D^+,\ \ 1\leq j'\leq m,
\end{cases}
\end{equation}
if $D$ is sound-hard.
\end{lem}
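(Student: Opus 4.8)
The plan is to read off both identities directly from the physical optics ansatz together with the reflection relations \eqref{eq:k2}, the only genuine choices being the sign of the reflected plane wave and the distinction between illuminated and shadow cells. Recall that the physical optics (Kirchhoff) approximation, which we take for granted as in the text, asserts that in the high-frequency regime --- guaranteed by \eqref{eq:cond1} --- the total field $u$ near the \emph{illuminated} part $\partial D^-$ of a convex obstacle agrees, up to lower-order terms, with the sum of the incident wave and the geometric-optics reflected wave, while near the \emph{shadow} part $\partial D^+$ the total field is negligible. Conditions \eqref{eq:cond2}--\eqref{eq:cond3} enter exactly here: \eqref{eq:cond2} ensures every cell is large compared with the wavelength, so that a well-defined reflected plane wave is generated off it, and \eqref{eq:cond3} rules out cells that are too ``round'' or too ``sharp'', for which the reflected beam off a single face would not dominate the local field. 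I would invoke the approximation on the interior of each cell, away from the edges/faces where corner and edge diffraction contribute; by \eqref{eq:cond2}--\eqref{eq:cond3} this region has size comparable to $\operatorname{diam}(C_j)$, so the statement is meaningful on essentially all of $\partial D$, and since the claim is only an ``$\approx$'' I would not track the (formally lower-order) error.

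For an illuminated cell $C_j\subset\partial D^-$, the geometric-optics reflected wave adjacent to $C_j$ is, up to sign, the plane wave $v_j$ of \eqref{eq:k1}, namely the mirror image of $u^i$ across the line/plane containing $C_j$; by convexity this reflected wave propagates away from $D$ and meets no other cell. If $D$ is sound-soft, the reflected wave is chosen so that it cancels the Dirichlet data of $u^i$ on $C_j$; since \eqref{eq:k2} gives $u^i-v_j=0$ on $C_j$, it equals $-v_j$, so $u\approx u^i-v_j$ near $C_j$. Differentiating and using the second relation in \eqref{eq:k2}, $\partial(u^i+v_j)/\partial\nu=0$ on $C_j$, one finds on $C_j$ that $\partial u/\partial\nu\approx\partial(u^i-v_j)/\partial\nu=2\,\partial u^i/\partial\nu$, which is the first branch of \eqref{eq:sa}. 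If $D$ is sound-hard, the reflected wave is instead $+v_j$, since $u^i+v_j$ already satisfies the Neumann condition on $C_j$ by \eqref{eq:k2}; hence $u\approx u^i+v_j$ near $C_j$, and on $C_j$ itself, where $v_j=u^i$ again by \eqref{eq:k2}, this reads $u\approx 2u^i$, the first branch of \eqref{eq:ha}.

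For a shadow cell $C_{j'}\subset\partial D^+$, the shadow part of the physical optics approximation gives $u\approx 0$ in a neighbourhood of the interior of $C_{j'}$, whence both $u$ and $\partial u/\partial\nu$ are $\approx 0$ there; this yields the second branches of \eqref{eq:sa} and \eqref{eq:ha}. The only genuinely delicate point, which the paper deliberately leaves aside, is a rigorous justification of the physical optics approximation itself --- in particular a quantitative control of the error and of the diffractive contributions concentrated near the edges and corners of $D$. Granting the approximation, as we do, what remains is the elementary bookkeeping above, where the one thing to watch is the opposite sign of the reflected wave in the sound-soft and sound-hard cases.
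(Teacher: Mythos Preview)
Your argument is correct and follows essentially the same route as the paper: both take the physical optics ansatz $u\approx u^i\mp v_j$ near an illuminated cell (with the sign dictated by the boundary condition), invoke the two relations in \eqref{eq:k2} to read off $\partial u/\partial\nu\approx 2\,\partial u^i/\partial\nu$ in the sound-soft case and $u\approx 2u^i$ in the sound-hard case, and set the shadow contribution to zero. Your version is in fact more carefully written than the paper's, which contains evident typos (it writes $u-v^j$ and $u+v^j$ where $u^i-v^j$ and $u^i+v^j$ are meant), and you also make explicit where the admissibility assumptions \eqref{eq:cond1}--\eqref{eq:cond3} enter.
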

\begin{proof}
According to our earlier discussion on the physical optics approximation, one takes
\begin{equation}\label{eq:sss1}
\frac{\partial u}{\partial \nu_j}(x)\approx \frac{\partial (u-v^j)}{\partial \nu_j}(x),\quad x\in C_j\subset\partial D^-,
\end{equation}
if $D$ is sound-soft. Then by using \eqref{eq:k1}, one directly verifies \eqref{eq:sa} for $x\in\partial D^-$, whereas if $C_{j'}\subset\partial D^+$, it is not illuminated, where one then takes $\partial{u}/{\partial\nu_{j'}}\approx 0$. If $D$ is sound-hard, then one takes
\begin{equation}\label{eq:sss2}
u(x)\approx u(x)+v^j(x),\quad x\in C_j\subset\partial D^-,
\end{equation}
which readily verifies \eqref{eq:ha}. 
\end{proof}

It is emphasized again that Theorem~\ref{thm:ka} is mainly based on physical observation, and its rigorous justification still largely remains open in the literature. Nevertheless, our subsequent study on the inverse scattering problem, along with the corresponding numerical experiments, validates such approximation as well.


In the sequel, we let
\[
\Phi(x, y)=\frac{e^{ik|x-y|}}{4\pi |x-y|},\quad n=3; \quad \frac{i}{4} H_0^{(1)}(k |x-y|),\quad n=2; \quad x\neq y,
\]
where $H_0^{(1)}$ denotes the zeroth-order Hankel function of the first kind. $\Phi$ is the fundamental solution to $-\Delta-k^2$. The following lemma shall be needed and its proof can be found in \cite{CK}.
\begin{lem}\label{lem:1}
For the scattering of a plane wave field $u^i$ in \eqref{eq:planewave} from an obstacle $D$, we have
\begin{equation}\label{eq:farf1}
u(x; D, u^i)=u^i(x)+\int_{\partial D}\left\{ \frac{\partial \Phi(x,y)}{\partial \nu(y)} u(y)-\Phi(x,y)\frac{\partial u}{\partial\nu}(y)\right\}\ ds(y),\quad x\in\mathbb{R}^n\backslash\overline{D},
\end{equation}
and
\begin{equation}\label{eq:farf2}
u^\infty(\hat x; D, u^i)=\gamma(n,k)\left[ \int_{\partial D}\left\{\frac{\partial e^{-ik \hat x\cdot y}}{\partial\nu(y)} u(y)-e^{-ik\hat x\cdot y}\frac{\partial u}{\partial\nu}(y)\right\}\ ds(y) \right],
\end{equation}
where the dimensional parameter $\gamma$ is given by
\begin{equation}\label{eq:gamma}
\gamma(n,k)=\frac{1}{4\pi}\quad \mbox{when}\ n=3;\quad \frac{e^{i\frac\pi 4}}{\sqrt{8\pi k}}\quad\mbox{when}\ n=2.
\end{equation}
\end{lem}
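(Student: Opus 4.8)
The plan is to establish \eqref{eq:farf1} as the classical Helmholtz--Green representation formula for the exterior scattering problem, and then to extract \eqref{eq:farf2} from the large-argument asymptotics of the fundamental solution $\Phi$. First I would fix $x\in\mathbb{R}^n\setminus\overline{D}$ and apply Green's second identity to the radiating field $u^s=u-u^i$ and to $\Phi(x,\cdot)$ over the truncated annular region $B_R\setminus(\overline{D}\cup\overline{B_\varepsilon(x)})$, where $B_R$ is a large ball containing $\overline{D}$ and $B_\varepsilon(x)$ a small ball excising the singularity of $\Phi(x,\cdot)$ at $x$. Since both functions solve $(-\Delta-k^2)(\cdot)=0$ in this region, the volume term drops out; letting $\varepsilon\to 0$, the contribution of $\partial B_\varepsilon(x)$ reproduces $u^s(x)$ (with the correct sign coming from the choice of normal orientation and the normalization of $\Phi$), leaving an identity relating $u^s(x)$ to boundary integrals over $\partial D$ and over $\partial B_R$. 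Since $D$ is only Lipschitz, the boundary integrals are understood through the trace theorems, and the identity is obtained by approximating $D$ from inside by smooth domains, or equivalently by invoking the mapping properties of the single- and double-layer potentials on Lipschitz boundaries.

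The key step is to show that the integral over $\partial B_R$ tends to zero as $R\to\infty$. Writing the integrand in the symmetrized form
\[
\frac{\partial\Phi}{\partial\nu}u^s-\Phi\frac{\partial u^s}{\partial\nu}=\Bigl(\frac{\partial\Phi}{\partial\nu}-ik\Phi\Bigr)u^s-\Phi\Bigl(\frac{\partial u^s}{\partial\nu}-iku^s\Bigr),
\]
I would use that $\Phi(x,\cdot)$ and its gradient obey $\Phi=O(R^{-(n-1)/2})$ and $\partial\Phi/\partial\nu-ik\Phi=O(R^{-(n+1)/2})$ on $\partial B_R$ (for $x$ fixed), together with the Sommerfeld condition $\partial u^s/\partial\nu-iku^s=o(R^{-(n-1)/2})$ and the a priori uniform decay $u^s=O(R^{-(n-1)/2})$, the latter itself being a consequence of the radiation condition. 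Since $|\partial B_R|=O(R^{n-1})$, both terms integrate to $o(1)$. This is precisely where the well-posedness/radiation framework for \eqref{eq:Helm} enters, and I regard it as the main (though entirely classical) obstacle.

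It then remains to incorporate $u^i$ and to pass to the far field. Applying Green's second identity to the entire solution $u^i$ and to $\Phi(x,\cdot)$ on the bounded domain $D$ — where $\Phi(x,\cdot)$ is smooth because $x\notin\overline{D}$ — the volume term vanishes again, so $\int_{\partial D}\{\partial_\nu\Phi(x,\cdot)\,u^i-\Phi(x,\cdot)\,\partial_\nu u^i\}\,ds=0$. Adding this to the representation of $u^s$ converts the densities $u^s,\partial_\nu u^s$ into $u,\partial_\nu u$, and recalling $u=u^i+u^s$ yields \eqref{eq:farf1}. Finally, for $y$ ranging over the compact set $\partial D$ and $|x|\to\infty$, I would insert $|x-y|=|x|-\hat x\cdot y+O(1/|x|)$ into $\Phi$ and into $\nabla_y\Phi$ (using $H_0^{(1)}(t)=\sqrt{2/(\pi t)}\,e^{i(t-\pi/4)}(1+O(1/t))$ in the planar case), obtaining
\[
\Phi(x,y)=\frac{e^{ik|x|}}{|x|^{(n-1)/2}}\Bigl(\gamma(n,k)\,e^{-ik\hat x\cdot y}+O(1/|x|)\Bigr),
\]
together with the analogous expansion for $\partial\Phi(x,y)/\partial\nu(y)$ in which $e^{-ik\hat x\cdot y}$ is replaced by $\partial_{\nu(y)}e^{-ik\hat x\cdot y}$; a direct computation yields $\gamma(n,k)=1/(4\pi)$ for $n=3$ and $\gamma(n,k)=e^{i\pi/4}/\sqrt{8\pi k}$ for $n=2$, which is \eqref{eq:gamma}. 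Substituting these expansions into \eqref{eq:farf1} and comparing with the definition \eqref{eq:asym} of the far-field pattern gives \eqref{eq:farf2}, completing the proposal.
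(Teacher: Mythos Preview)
Your proposal is correct and follows the classical Green's identity derivation of the Helmholtz representation formula together with the large-argument asymptotics of the fundamental solution, which is precisely the approach in the reference the paper cites; the paper itself does not supply an independent proof but simply refers to \cite{CK}. In that sense you have reproduced, in appropriate detail, exactly the argument the authors are invoking.
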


\begin{lem}\label{lem:2}
Using the physical optics approximation in Lemma~\ref{thm:ka}, for the scattering of a plane wave $u^i$ in \eqref{eq:planewave} from an admissible polyhedral obstacle $D$, one has that if $D$ is sound-soft
\begin{equation}\label{eq:k3}
u^\infty(\hat x)\approx-2\gamma(n,k)\int_{\partial D^-}\frac{\partial e^{iky\cdot d}}{\partial\nu(y)} e^{-ik\hat x\cdot y}\ ds(y),
\end{equation}
whereas if $D$ is sound-hard
\begin{equation}\label{eq:k4}
u^\infty(\hat x)\approx 2\gamma(n,k)\int_{\partial D^-}\frac{\partial e^{-ik\hat x\cdot y}}{\partial \nu(y)} e^{iky\cdot d}\ ds(y).
\end{equation}
\end{lem}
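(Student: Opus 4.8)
The plan is to obtain \eqref{eq:k3} and \eqref{eq:k4} by substituting the physical optics approximation of Lemma~\ref{thm:ka} directly into the Helmholtz representation \eqref{eq:farf2} of Lemma~\ref{lem:1}, exploiting the fact that in each of the two cases exactly one of the two boundary integrals in \eqref{eq:farf2} survives on account of the boundary condition in \eqref{eq:Helm}. Concretely, for a sound-soft obstacle the Dirichlet condition $u=0$ on $\partial D$ annihilates the double-layer term exactly, so \eqref{eq:farf2} collapses to $u^\infty(\hat x)=-\gamma(n,k)\int_{\partial D}e^{-ik\hat x\cdot y}\,(\partial u/\partial\nu)(y)\,ds(y)$; and for a sound-hard obstacle the Neumann condition $\partial u/\partial\nu=0$ on $\partial D$ annihilates the single-layer term, so \eqref{eq:farf2} collapses to $u^\infty(\hat x)=\gamma(n,k)\int_{\partial D}(\partial e^{-ik\hat x\cdot y}/\partial\nu(y))\,u(y)\,ds(y)$. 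In both reduced formulas the only surviving Cauchy datum is precisely the one for which Lemma~\ref{thm:ka} supplies an approximation.

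First, in the sound-soft case I would split the domain of integration as $\partial D=\partial D^-\cup\partial D^+$ and insert \eqref{eq:sa}: on a front cell $C_j\subset\partial D^-$ one has $\partial u/\partial\nu\approx 2\,\partial u^i/\partial\nu=2\,\partial e^{iky\cdot d}/\partial\nu(y)$ (recall $u^i(x)=e^{ikx\cdot d}$ by \eqref{eq:planewave} with $\xi=kd$), while on the back cells $\partial u/\partial\nu\approx 0$, so the contribution of $\partial D^+$ drops out and one is left with $u^\infty(\hat x)\approx-2\gamma(n,k)\int_{\partial D^-}(\partial e^{iky\cdot d}/\partial\nu(y))\,e^{-ik\hat x\cdot y}\,ds(y)$, which is \eqref{eq:k3}. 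Symmetrically, in the sound-hard case I would insert \eqref{eq:ha} into the reduced formula: $u\approx 2u^i=2e^{iky\cdot d}$ on $\partial D^-$ and $u\approx 0$ on $\partial D^+$, giving $u^\infty(\hat x)\approx 2\gamma(n,k)\int_{\partial D^-}(\partial e^{-ik\hat x\cdot y}/\partial\nu(y))\,e^{iky\cdot d}\,ds(y)$, which is \eqref{eq:k4}.

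There is no analytic obstacle here in the usual sense; every step is an exact identity except for the single insertion of Lemma~\ref{thm:ka}. The one point I would be careful to record is the bookkeeping of the approximation symbol: since the representation formula \eqref{eq:farf2} and the boundary conditions from \eqref{eq:Helm} are exact, the discrepancy in \eqref{eq:k3}--\eqref{eq:k4} is inherited, term by term, from the physical optics approximation \eqref{eq:sa}--\eqref{eq:ha}, i.e.\ it is exactly the diffractive contribution of corners and edges (discarded in Lemma~\ref{thm:ka}) that is being neglected; and, as already emphasized after Lemma~\ref{thm:ka}, the rigorous control of this error in the high-frequency regime \eqref{eq:cond1} is the part that remains open and is taken here as a standing hypothesis. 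Thus the genuine difficulty lies not in deriving Lemma~\ref{lem:2} but in the physical optics assumption on which it rests.
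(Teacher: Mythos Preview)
Your proposal is correct and follows exactly the approach of the paper, which simply states that the lemma is a straightforward consequence of \eqref{eq:farf2} together with \eqref{eq:sa} and \eqref{eq:ha}. You have merely written out in full the substitution that the paper leaves implicit, including the use of the boundary condition to kill one of the two layer potentials before inserting the physical optics approximation.
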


\begin{proof}
This is a straightforward consequence of \eqref{eq:farf2} in Lemma~\ref{lem:1} and, \eqref{eq:sa} and \eqref{eq:ha} in Lemma~\ref{thm:ka}.
\end{proof}

By using Lemma~\ref{lem:2}, we are now in a position to present the local maximum behavior of $|u_\infty(\hat x; D, k, d)|$.
\begin{thm}\label{thm:1}
Let $D$ be an admissible sound-soft or sound-hard polyhedral obstacle with respect to the incident plane wave $e^{ikx\cdot d}$ as described earlier. Suppose that $C_j\subset\partial D^-$ is a front cell of the obstacle, and $\nu_j$ is the unit normal vector to $C_j$ pointing to the exterior of $D$, $1\leq j\leq m$. Let $\hat x_j\in\mathbb{S}^{n-1}$ be the critical observation direction with respect to $d$ and $\nu_j$. Under the physical optics approximation, $\hat{x}_j$ is a local maximum point of $|u^\infty(\hat x; D, e^{ikx\cdot d})|$.
\end{thm}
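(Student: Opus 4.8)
The plan is to feed the physical-optics representation of Lemma~\ref{lem:2} into a single-cell oscillatory-integral analysis, and to show that at $\hat x=\hat x_j$ exactly one cell contributes non-oscillatorily while every other cell, and every perturbation of $\hat x$ away from $\hat x_j$, forces genuine oscillation and hence decay. Since $\partial D^-$ is (up to a set of measure zero) the union of the front cells $C_l$, and $\nu(y)\equiv\nu_l$ is constant on $C_l$, formulas \eqref{eq:k3}--\eqref{eq:k4} become
\[
u^\infty(\hat x)\approx -2ik\gamma(n,k)\sum_{C_l\subset\partial D^-}\beta_l(\hat x)\,I_l(\hat x),\qquad I_l(\hat x):=\int_{C_l}e^{iky\cdot(d-\hat x)}\,ds(y),
\]
with $\beta_l(\hat x)=d\cdot\nu_l$ in the sound-soft case and $\beta_l(\hat x)=\hat x\cdot\nu_l$ in the sound-hard case. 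Note that the set of front cells is determined by $d$ alone, hence does not vary with $\hat x$ throughout the argument.

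First I would analyze a single $I_l$. Decomposing $d-\hat x=\big((d-\hat x)\cdot\nu_l\big)\nu_l+w_l(\hat x)$ with $w_l(\hat x)\perp\nu_l$, and using the parameterization $\langle y,\nu_l\rangle=l_l$ of $C_l$ from \eqref{eq:cell1}, the normal component contributes only a modulus-one factor that is constant on $C_l$; thus $|I_l(\hat x)|=|G_l(w_l(\hat x))|$, where $G_l(w):=\int_{C_l}e^{iky\cdot w}\,ds(y)$ is an integral over the flat cell. Two properties of $G_l$ are needed. (a) $|G_l(w)|\le G_l(0)=|C_l|$ for every $w$, and $w=0$ is a strict, nondegenerate local maximum: indeed $|G_l(w)|^2$ equals the Fourier transform, at frequency $kw$, of the nonnegative, even, compactly supported function $z\mapsto|C_l\cap(C_l+z)|$, which is not identically zero, so its transform is dominated by its value at the origin and has negative-definite Hessian there. (b) $|G_l(w)|\le c_l/(k|w|)$ for $w\neq0$: this is the non-stationary-phase estimate for a flat polygonal cell, obtained by applying the tangential divergence theorem to reduce $G_l$ to integrals over the edges of $C_l$ (or, for $n=2$, by integrating directly over the segment).

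Next I would localize near $\hat x_j$. By \eqref{eq:cd}, $d-\hat x_j=2(d\cdot\nu_j)\nu_j\parallel\nu_j$, so $w_j(\hat x_j)=0$ and $|I_j(\hat x_j)|=|C_j|$ is the peak value; while for any other front cell $C_l$, $w_l(\hat x_j)=2(d\cdot\nu_j)\big(\nu_j-(\nu_j\cdot\nu_l)\nu_l\big)\neq0$, since $d\cdot\nu_j<0$ and $\nu_j\nparallel\nu_l$ (by \eqref{eq:cond3}, $\angle(\nu_j,\nu_l)\ge h_1>0$), with $|w_l(\hat x_j)|$ bounded below by a positive constant depending only on $D$ and $d$. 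Hence, by continuity and (b), on a fixed (independent of $k$) neighborhood $U$ of $\hat x_j$ the off-diagonal terms are $O(1/k)$, so
\[
u^\infty(\hat x)=-2ik\gamma(n,k)\,\beta_j(\hat x)\,I_j(\hat x)+R(\hat x),\qquad \sup_U|R|\le C_\ast,
\]
with $C_\ast$ independent of $k$, which is negligible next to $|u^\infty(\hat x_j)|$ (of order $k|\gamma(n,k)|\,|d\cdot\nu_j|\,|C_j|$) once \eqref{eq:cond1} holds. The leading term has modulus $2k|\gamma(n,k)|\,|\beta_j(\hat x)|\,|G_j(w_j(\hat x))|$; the map $\hat x\mapsto w_j(\hat x)$ is a local diffeomorphism near $\hat x_j$ (its differential is, up to sign, the orthogonal projection onto $\nu_j^{\perp}$ restricted to $T_{\hat x_j}\mathbb{S}^{n-1}$, injective because $\hat x_j\cdot\nu_j=-d\cdot\nu_j\neq0$), so by (a) the factor $|G_j(w_j(\hat x))|$ is maximized over $U$ exactly at $\hat x_j$. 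In the sound-soft case $\beta_j\equiv d\cdot\nu_j$ is constant, so $|u^\infty|$ is, up to the negligible remainder, a fixed multiple of $|G_j(w_j(\hat x))|$ and $\hat x_j$ is a local maximum. In the sound-hard case $\beta_j(\hat x)=\hat x\cdot\nu_j$ varies on the $O(1)$ scale whereas $G_j(w_j(\hat x))$ concentrates on the $O((k\,\mbox{diam}(D))^{-1})$ scale, so the product is still maximized at $\hat x_j$ to leading order in the high-frequency asymptotics.

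The main obstacle is the high-frequency bookkeeping in the last two steps: proving (a)--(b) with the correct uniformity in the direction of $w$ (for $n=3$ an edge of $C_l$ orthogonal to $w$ yields only $O((k|w|)^{-1})$, rather than $O((k|w|)^{-2})$, decay, but the weaker bound already suffices), and quantifying the sense in which the amplitude is slowly varying relative to the peak of $G_j$. In the sound-hard case the gradient of $\hat x\mapsto\hat x\cdot\nu_j$ at $\hat x_j$ is in general nonzero (it vanishes only for exact backscattering $d=-\nu_j$), so strictly one obtains that $\hat x_j$ is a local maximum of $|u^\infty|$ only up to an $O((k\,\mbox{diam}(D))^{-2})$ correction concentrated within $O((k\,\mbox{diam}(D))^{-2})$ of $\hat x_j$; this is consistent with the purely asymptotic, physical-optics character of the statement, and the conclusion is to be read accordingly.
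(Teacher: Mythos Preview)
Your argument follows the same route as the paper's: write the physical-optics far field as a sum of cell integrals $\int_{C_l}e^{iky\cdot(d-\hat x)}\,ds(y)$, observe that at $\hat x=\hat x_j$ the phase is constant on $C_j$ (so that integral attains its peak value $|C_j|$) while on every other front cell the phase has a nonvanishing tangential gradient (by the angle separation \eqref{eq:cond3}) and hence the integral is small by non-stationary phase. The paper carries this out more tersely---it splits into $I_1$ over $C_j$ and $I_2$ over $\partial D^-\setminus C_j$, asserts $|I_2|\sim 1/(k^{n-1}h_0)$, and for $I_1$ simply notes that $y\cdot(d-\hat x_j)=0$ on the affine cell $C_j^0$---whereas you supply the extra structure: the autocorrelation argument for the strict, nondegenerate maximum of $|G_j|$ at $w=0$, the explicit $O(1/(k|w|))$ bound, and the check that $\hat x\mapsto w_j(\hat x)$ is a local diffeomorphism. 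Your observation that in the sound-hard case the varying amplitude $\hat x\cdot\nu_j$ can displace the maximum by an amount that is $O((k\,\mathrm{diam}(D))^{-2})$ is a genuine refinement; the paper's proof treats $J_1$ exactly like $I_1$ and does not raise this point.
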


\begin{proof}
We first consider the case that $D$ is an admissible sound-soft polyhedral obstacle. By \eqref{eq:k3} in Lemma~\ref{lem:2}, we have
\begin{equation}\label{eq:k5}
\begin{split}
& u^\infty(\hat x; D, k, d)\approx -2\gamma(n,k)\int_{\partial D^-}\frac{\partial e^{iky\cdot d}}{\partial\nu(y)} e^{-ik\hat x\cdot y}\ ds(y)\\
=& -2\gamma(n,k)\int_{\partial D^-} ik \nu(y)\cdot d e^{iky\cdot (d-\hat x)}\ ds(y)\\
=& -2\gamma(n,k)\cdot ik\left[\int_{C_j}\nu_j\cdot d\ e^{ik y\cdot (d-\hat x)}\ ds(y)+ \int_{\partial D^-\backslash C_j}\nu(y)\cdot d\ e^{ik y\cdot (d-\hat x)}\ ds(y) \right]\\
=& \widetilde{\gamma}(n,k)\cdot [I_1(\hat x)+I_2(\hat x)],
\end{split}
\end{equation}
where $\widetilde{\gamma}(n,k):=-2\gamma(n,k)\cdot ik$ and
\[
I_1(\hat x):=\int_{C_j}\nu_j\cdot d\ e^{ik y\cdot (d-\hat x)}\ ds(y),\quad I_2(\hat x):= \int_{\partial D^- \backslash C_j}\nu(y)\cdot d\ e^{ik y\cdot (d-\hat x)}\ ds(y).
\]

We next analyze the behavior of $I_\alpha (\hat x)$, $\alpha=1,2$, in a small neighborhood of $\hat x_j$ on $\mathbb{S}^{n-1}$, say $\Gamma_j$, $1\leq j\leq m$. Since
\begin{equation}\label{eq:kk1}
(d-\hat x_j)\parallel \nu_j,
\end{equation}
by assumption~\eqref{eq:cond3}, we see that there exists a non-asymptotic constant $\epsilon_0\in\mathbb{R}_+$ such that
\begin{equation}\label{eq:kk2}
|\tau(y)\cdot (d-\hat x_j)|\geq \epsilon_0,\ \ y\in\partial D^-\backslash C_j,
\end{equation}
where $\tau(y)$ represents a unit tangent vector on $\partial D$. Hence, by \eqref{eq:kk2} and \eqref{eq:cond2}, one has by direct calculations that
\begin{equation}\label{eq:kk3}
|I_2(\hat x)|\sim \frac{1}{k^{n-1} h_0}\ll 1,\quad \hat x\in\Gamma_j.
\end{equation}
On the other hand, for $\hat x\in\Gamma_j$, we have
\begin{equation}\label{eq:kk4}
\begin{split}
I_1(\hat x)=& \int_{C_j}\nu_j\cdot d\ e^{ik y\cdot (d-\hat x)}\ ds(y)\\
=& e^{ik y_0\cdot (d-\hat x)}\int_{C_j^0} \nu_j\cdot d\ e^{ik y\cdot (d-\hat x)}\ ds(y),
\end{split}
\end{equation}
where $y_0$ is any fixed point on $C_j$.
Since
\[
y\cdot (d-\hat x_j)=0\ \ \mbox{for}\ \ y\in C_j^0,
\]
one clearly sees that $|I_1(\hat x)|$ achieves its local maximum value at $\hat x_j$, which in combination with \eqref{eq:kk3} completes the proof of the theorem for the sound-soft case. 

The sound-hard case can be shown by following a similar argument. By using \eqref{eq:k4}, one has
\begin{equation}\label{eq:kk5}
\begin{split}
u^\infty(\hat x; D, k, d)\approx & -2\gamma(n,k)\cdot ik\int_{\partial D^-} \nu(y)\cdot \hat x\ e^{ik y\cdot(d-\hat x)}\ ds(y)\\
=& \widetilde{\gamma}(n,k)\cdot [J_1(\hat x)+J_2(\hat x)],
\end{split}
\end{equation}
where
\begin{equation}\label{eq:kk6}
J_1(\hat x)=\int_{C_j}\ \nu_j\cdot \hat x \ e^{ik y\cdot (d-\hat x)}\ ds(y),\quad J_2(\hat x)=\int_{\partial D^- \backslash C_j}\ \nu(y)\cdot \hat x \ e^{ik y\cdot (d-\hat x)}\ ds(y).
\end{equation}
We consider the behavior of $J_\alpha(\hat x)$, $\alpha=1,2$, in a small neighborhood of $\hat x_j$ on $\mathbb{S}^{n-1}$, say $\Sigma_j$, $1\leq j\leq m$. Following a similar argument to \eqref{eq:kk4}, together with the fact \eqref{eq:kk1}, one can see that $|J_1(\hat x)|$ achieves its local maximum value at $\hat x_j$. On the other hand, by a similar argument in deriving \eqref{eq:kk3}, one has that 
\begin{equation}\label{eq:kkk0}
|J_2(\hat x)|\ll 1\quad\mbox{for}\ \ \hat x\in\Sigma_j,\ \ 1\leq j\leq m. 
\end{equation}
Hence, $\hat x_j$ is local maximum point of $|u^\infty(\hat x; D, e^{ikx\cdot d})|$.

The proof is complete. 

\end{proof}

\begin{rem}
In Theorem~\ref{thm:1}, we only consider the local maximum behavior of $|u^\infty(\hat x)|$ in the backscattering aperture. In fact, by \eqref{eq:k5}, one clearly sees that $|u^\infty(\hat x)|$ achieves its (global) maximum value at $\hat x=d$. However, this maximum behavior in the forward-scattering aperture gives us no information about the obstacle, which is demonstrated and verified in Fig.~\ref{fig:2}.
\end{rem}

\begin{rem}
It is noted from \eqref{eq:kk5} that for a sound-hard obstacle if $\hat x\perp \nu_j$, then both $J_1$ and $J_2$ vanish, and hence such an $\hat x$ should be a local minimum point of $|u^\infty(\hat x; D, k, d)|$. However, it cannot be guaranteed that such an $\hat x$ belongs to the backscattering aperture. Nevertheless, it could be used in combination with the critical observation direction as a double indicator.
\end{rem}

\section{Recovery scheme}

Based on our study  in the previous section, we shall present a recovery scheme of identifying an admissible polyhedral obstacle $D$. To that end, we let $k$ be a sufficiently large wavenumber such that assumption~\ref{eq:cond1} is fulfilled. Then, let $d_\alpha$, $\alpha=1,2,\ldots, p$ be a few properly chosen incident directions. The basic requirement is that the union of the front-faces with respect to $d_\alpha$, $\alpha=1,2,\ldots, p$ should cover the whole boundary $\partial D$. Furthermore, we shall assume that $D$ is an admissible obstacle with respect to each $d_\alpha$. As remarked earlier, this assumption is generically satisfied by an overall not very ``round" obstacle.

First of all, clearly, by Theorem~\ref{thm:1} and the discussion following its proof, one can recover a family of exterior unit normal vectors. However, it must be emphasized that multiple groups of normal directions are actually determined corresponding to detecting waves with different incident directions. Within each group, those normal directions point roughly to the same direction. Up to this stage, we choose the normal direction, which is associated with the critical observation angle with the largest magnitude of phaseless data within the group, to be an \emph{effective} exterior unit normal direction. Following this principle, we can recover all the exterior unit normal vector $\nu_j$ to each of the cell $C_j$ to $\partial D$, $j=1,2,\ldots, m$.
In turn, we also know the number of cells of the obstacle.

Next, we proceed to identify each cell, and we let $x_0\in D$ be a fixed location point. Let each $C_j$ be parameterized as follows
\begin{equation}\label{eq:pp11}
\langle x-x_0, \nu_j\rangle=l_j,\quad x\in C_j,
\end{equation}
where $l_j$ denotes the distance between the origin and the line/plane containing the cell $C_j-x_0:=\{x-x_0; x\in C_j\}$.
We note that $\nu_j$ and $l_j$, $j=1,2,\ldots, m$, and $x_0$ uniquely determine the convex obstacle $D$. It is highlighted that
the location point $x_0$ can be initially guessed by Scheme I in \cite{LLZ} using single-shot measurement data at low frequency.
Hence, after the determination of $x_0$ and $\nu_j$, $j=1,2,\ldots, m$, the corresponding inverse problem reduces to a finite dimensional problem of finding $l_j$, $j=1,2,\ldots, m$. But we have richer data set $u^\infty(\hat x; k, d_\alpha)$ to that purpose.

Now, let us take the sound-soft obstacle as an illustration to derive the rest of the recovery scheme. Suppose $\hat{x}^\alpha_j$ is a critical observation direction corresponding to $C_j$ with respect to a certain $d_{\alpha}$, $1\leq\alpha\leq p$.
Then, by \eqref{eq:k5}, we have
\begin{equation}\label{eq:pp2}
u^\infty(\hat{x}^\alpha_j; k, d_{\alpha})\approx \widetilde{\gamma}(n, k) \int_{C_j}\nu_j\cdot d_{\alpha}\ e^{ik y\cdot (d_{\alpha}-\hat{x}^\alpha_j)}\ ds(y)\,. 
\end{equation}
Eq.~\eqref{eq:pp2} is clearly a finite dimensional nonlinear problem with respect to $l_j$'s, which are hidden in the implicit functions defining $C_j$'s.

By supplementing $m$ particularly chosen normal directions and their respective  critical observation angles, one can arrive at a nonlinear system of $m$ equations with $m$ unknown $l_j$, $j=1,2,\ldots, m$. The cells can be determined all at once by solving the finite dimensional problem.
It is emphasized that the chosen $m$ nonlinear equations \eqref{eq:pp2} hold only approximately, thus we convert it into a nonlinear least-squares minimization problem to yield a more stable and accurate identification.

Now we are in a position to present our main reconstruction algorithm of polygonal obstacles which is sketched in Fig.~\ref{fig:11}

\medskip

\noindent {\underline {\bf Recovery Scheme}.}

\medskip

Step 1. Employ Scheme I proposed in \cite{LLZ} to locate the position $x_0$ of the polygonal scatterer using an incident detecting wave of low frequency.

Step 2. Determine the $m$ effective exterior normal vectors $\nu_j$, $j=1,2,\ldots,m$ and their associated critical observation angles from multiple groups of candidate normal vectors associated with all the critical observation angles.

Step 3. Given the distances $l_j$'s from $x_0$ along the direction $\nu_j$, $j=1,2,\ldots,m$, we can locate all the perpendicular points $P_j$'s.

Step 4. Extend the line/plane at $P_j$'s perpendicular to the respective $\nu_j$ and denote those crossing points to be vertices $K_j$'s. The line/plane of the polygonal scatterer can be determined by $C_j:=\overline{K_j K_{j'}}$ where $j'=j+1$ if $j<m$ and $j'=1$ if $j=m$.

Step 5. Select the first $m$ critical observation angles $\hat{z}_\beta$, $\beta:=\beta(\alpha,j)=1,2,\ldots, m$ from all the $\hat{x}^\alpha_j$, $\alpha=1,2,\ldots,p$, $j=1,2,\ldots,m$ according to the decreasing order of magnitude of local backscattering maxima and convert the corresponding $m$ nonlinear equations associated with \eqref{eq:pp2}  with $m$ unknowns $l_j$'s into a least-squares minimization problem. It is noted that the integral to the right hand side of \eqref{eq:pp2} is approximated by the trapezoidal quadrature rule with sufficiently fine step size.
\begin{eqnarray*}
  \min   F(t)  &=&  \sum_{\beta=1}^m \left|u^\infty(\hat{z}_\beta; k, d_{\alpha})- \widetilde{\gamma}(n, k) \int_{C_j}\nu_j\cdot d_{\alpha}\ e^{ik y\cdot (d_{\alpha}-\hat{z}_\beta)}\ ds(y)\right|^2  \label{eq:costfun}\\
  t &=&(l_1,l_2,\ldots,l_m)^T. \nonumber
\end{eqnarray*}

Step 6. The minimum of the  multivariable nonlinear least-squares minimization problem is obtained by employing a derivative-free trust region method via a local quadratic surrogate model-based search algorithm. Interested readers may refer to \cite{CSV} and the references therein.



\begin{figure}
\centering
\includegraphics[width=0.6\textwidth]{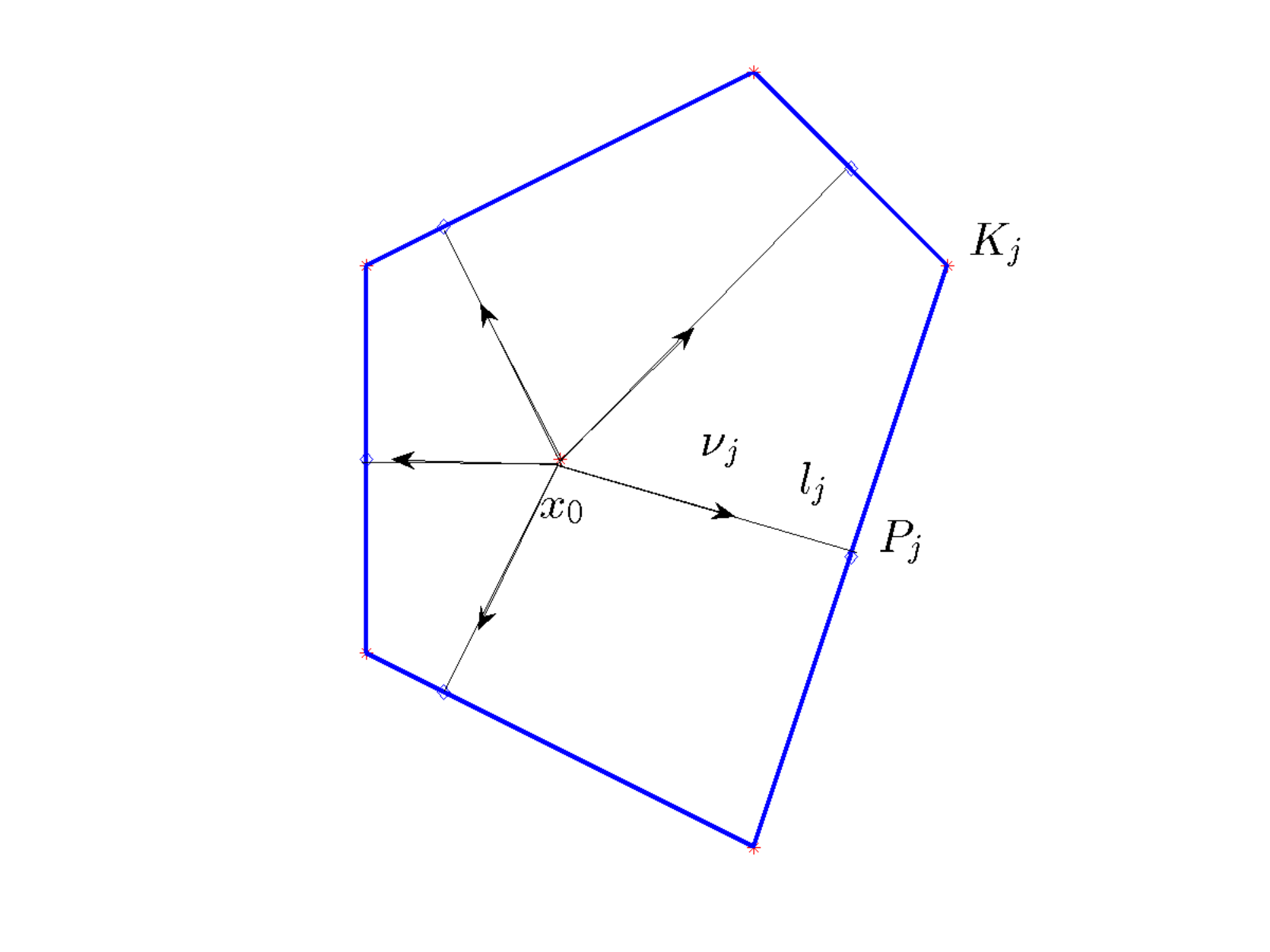}
\caption{\label{fig:11} Sketch of the recovery scheme. }
\end{figure}

\section{Numerical experiments and discussions}

In this section, we present some numerical tests to verify the applicability and effectiveness of the proposed recovery scheme in two dimensions.
In the sequel,  the forward equation (\ref{eq:Helm}) is first solved
by using the quadratic finite element discretization  on a truncated circular  domain enclosed by a PML layer.
The forward solver is iterated on a sequence of successively refined meshes till the relative error of two successive finite element solutions between the two adjacent meshes is below $0.1\%$. Then the scattered data are transformed into the far-field data by employing the Kirchhoff integral formula on a closed circle (2D)  enclosing the scatterer.

First of all, let's fix the parameter settings. For the positioning purpose , we apply  Scheme I (see \cite{LLZ}) with $k=1$ to detect the initial displacement guess $x_0$ of the unknown polygonal scatterer. A few detecting waves with the incident directions chosen among the set $\{ d_j=(\cos(j\pi/4),\sin(j\pi/4))\}$, $j=1,2,\ldots,8$, are sent off for locating the sides of the polygonal scatterer.  Then the far-field data
are measured and collected at 360 equidistant observation angles along the unit circle. The far-field data generated on the unit circle are then subjected
pointwise to certain uniform random noise. The uniform random noise
in magnitude as well as in direction is added according to the
following formula,
\begin{equation}
u^\infty=u^\infty+\delta \,r_{1}|u^\infty|\exp(i\pi \,r_{2})\,,
\end{equation}
where  $r_{1}$ and $r_{2}$ are two uniform random
numbers, both ranging from -1 to 1, and $\delta$ represents the
noise level.

For polygonal obstacles, we shall test sound-soft and sound-hard, noise-free and noisy cases, respectively. In the noise-free case, it is well-known that the far-field data is analytic and thus very smooth. As a consequence, the local maximum behavior of phaseless far-field data is clear from its polar graph as shown in Fig.~\ref{fig:2} when there exists no noise. While in the noisy case, we always add to the exact far-field data a uniform noise of $5\%$ and use it as the noisy measurement data, which is inevitable from the practical point of view. But the local maximum behavior of phaseless far-field data might be corrupted by the ups and downs of random noise which cause fictitious and/or more local mamima than usual. The cure out of the dilemma is to add a preprocessing step to filter the raw noisy data. In our tests, a fourier filtering stage is applied to the noisy far-field data in advance.  The filtered measurement data are thus used as in the recovery scheme, which gives us better reconstructions than using raw data by our experience.


\begin{figure}[h]
\centering
\hfill{}\includegraphics[width=0.43\textwidth]{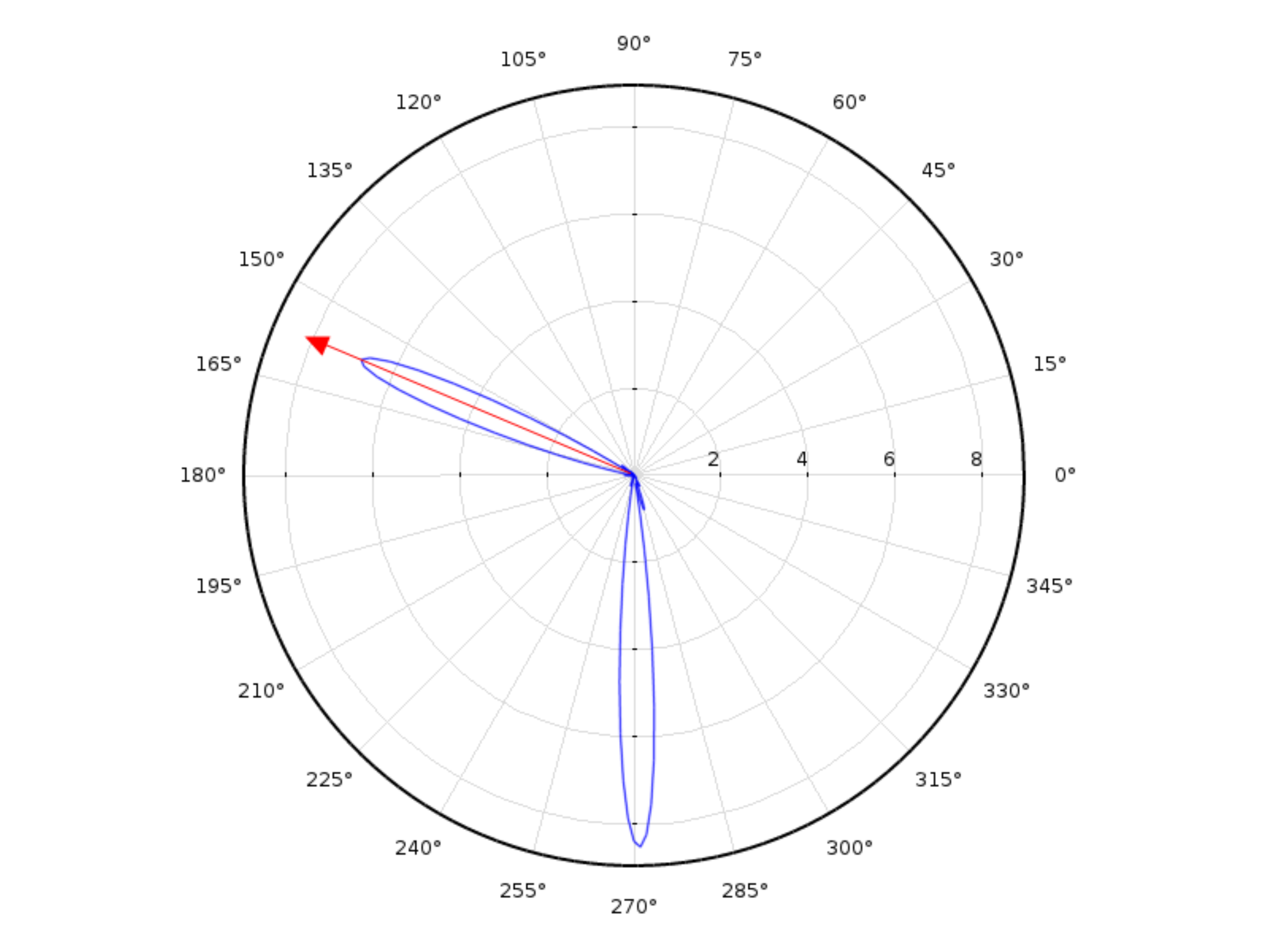}\hfill{} \includegraphics[width=0.43\textwidth]{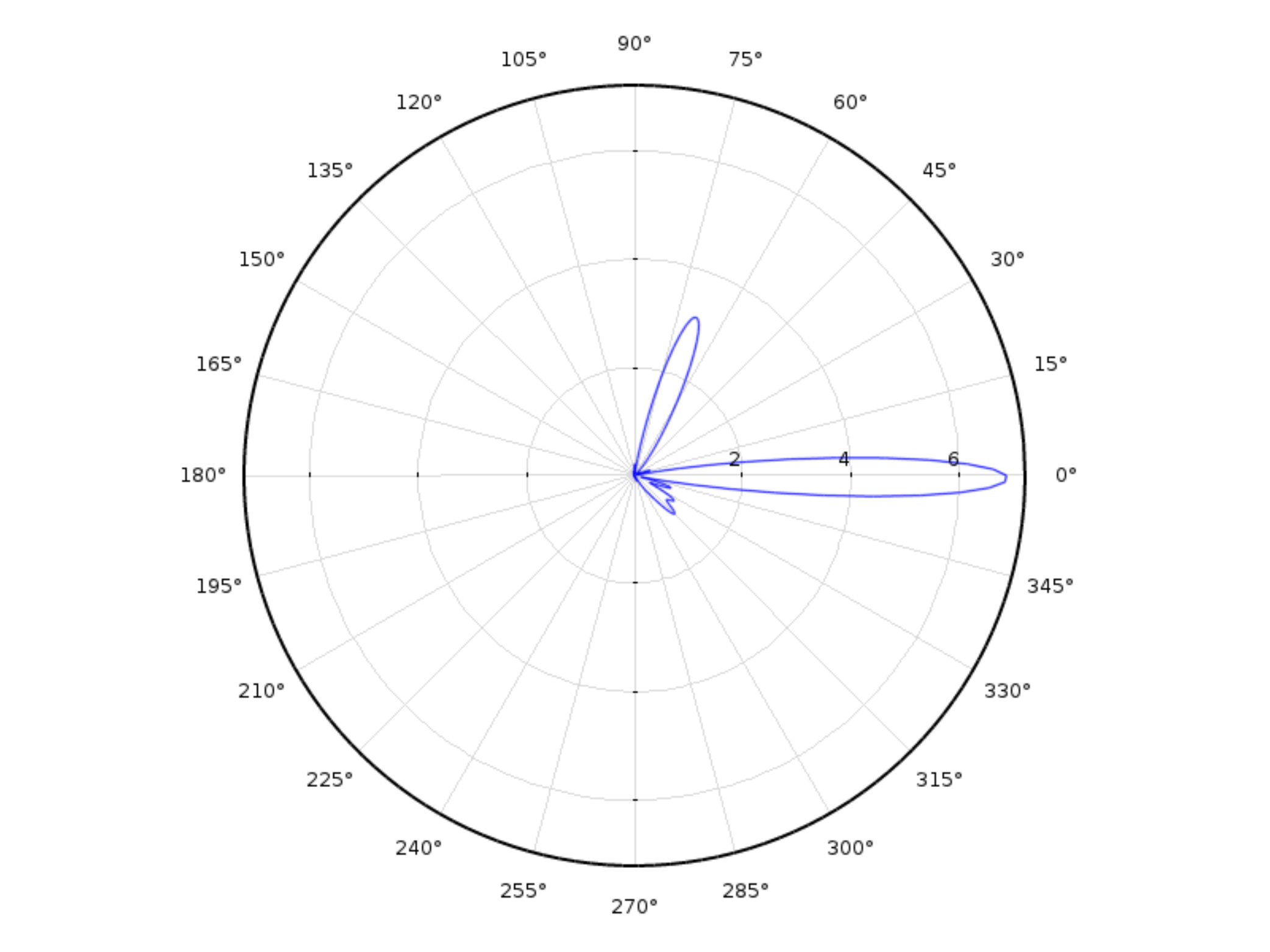}\hfill{}

\hfill{}(a)\hfill{}\hfill{}(b)\hfill{}

\hfill{}\includegraphics[width=0.43\textwidth]{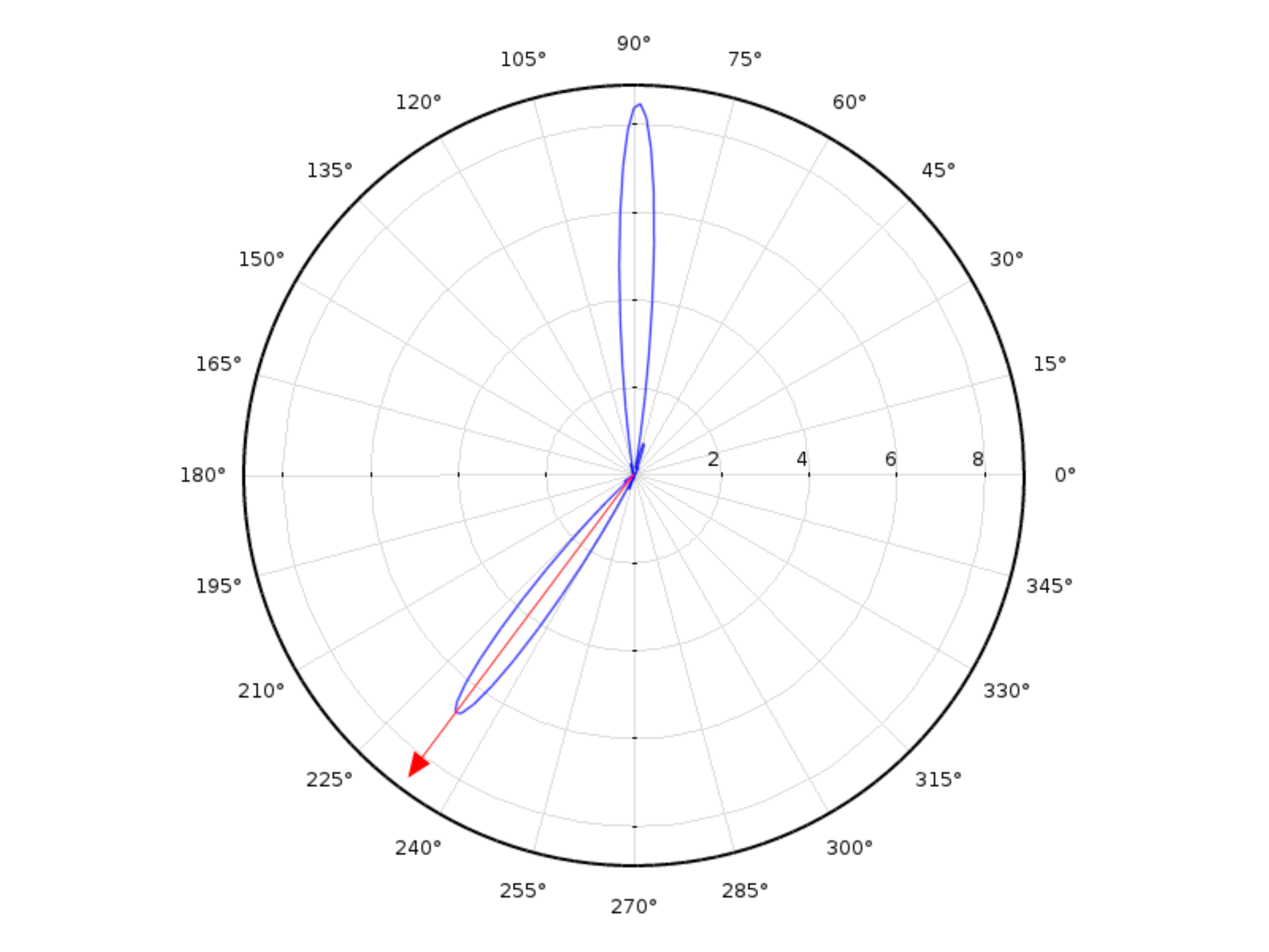}\hfill{} \includegraphics[width=0.43\textwidth]{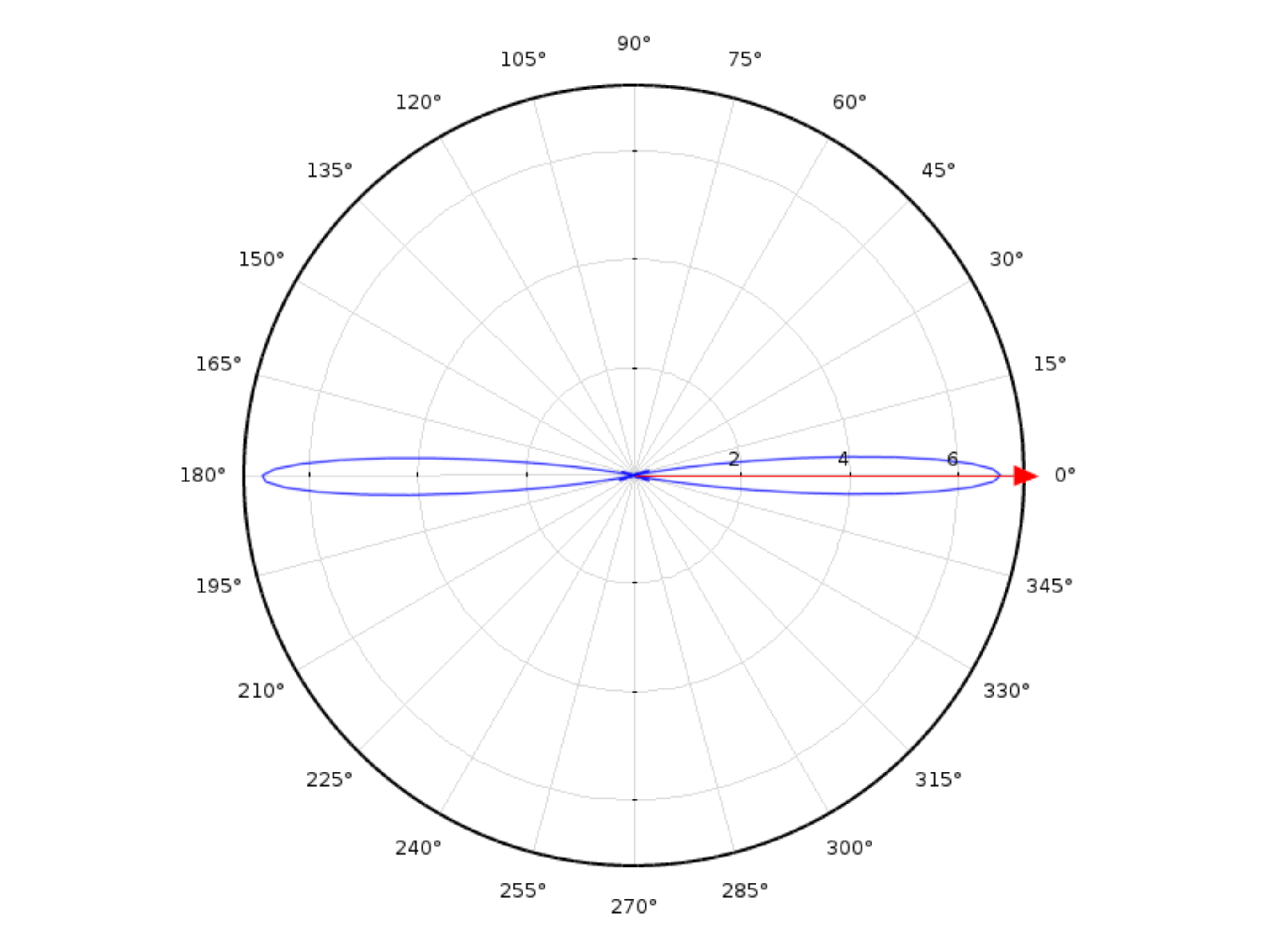}\hfill{}

\hfill{}(c)\hfill{}\hfill{}(d)\hfill{}

\caption{\label{fig:ex1:SS:ff} Plot of $|u^\infty(\hat x)|^2$ in polar coordinates corresponding to a sound-soft triangle due to an incident plane wave $e^{ikx\cdot d}$ with  $k=6\pi$ and $d=d_2,d_4,d_6,d_8$ from (a) to (d). The selected critical observation angles $\hat z_1$, $\hat z_2$ and $\hat z_3$ are highlighted by red arrows. }
\end{figure}

\medskip
\noindent {\bf Example 1. A triangle.}
\medskip

The obstacle is chosen to be a triangle with three vertices displaced at $(1,0)$, $(2.5,-0.5)$ and $(2.5,1)$, respectively. In this test,
we send off four detecting waves from north, east, south and east with $d=d_2,\ d_4,\ d_6,\ d_8$, respectively.

Firstly, we test a sound-soft triangular obstacle. The initial position $x_0$ of the polygonal scatterer is detected by Scheme I  (see \cite{LLZ}) and obtained
by taking the position with maximum indicator function value.
The initial guess of the location point is found to be $x_0=(2.136,\ 0.217)$, denoted by a red star in Fig.~\ref{fig:ex1:SS:reconstruction}.

We plot the square power of the phaseless far-field data in Fig.~\ref{fig:ex1:SS:ff}.  It can be seen from Fig.~\ref{fig:ex1:SS:ff} that
the phaseless data display significant maxima along the forward-scattering (or incident direction) directions within the shadow region of the obstacle in all four plots. Except the forward-scattering directions, we find several other directions with local maximum behavior in the polar plots and determine the three critical observation angles with the largest magnitude of phaseless data, which is indicated by the red arrows in Fig.~\ref{fig:ex1:SS:ff}(a), (c) and (d), respectively.
With the respective set of incident and critical observation directions obtained in hand, one can determine the three exterior unit normal directions of the sides of the polygonal obstacle.

The rest of the work is reduced to be a three-dimensional nonlinear least-squares minimization problem.  The final reconstruction results are shown in Fig.~\ref{fig:ex1:SS:reconstruction}, which is quite satisfactory  in both noise-free and noisy cases.
It can be observed that we can obtain better reconstruction by using detecting waves with relatively high wave number for this sound-soft polygonal scatterer. This can be explained in the following way.  The detecting wave with high frequency  gives a more focused reflected beam and thus yields a better approximation of the physical optics. The higher the frequency of the detecting wave, the more accurate determination of the outward normal directions and thus we can obtain better reconstruction plots. Moreover, we see that the proposed recovery scheme is tolerable to relatively high level of noise and performs robust in the noisy case.

\begin{figure}[h]
\centering
\hfill{}\includegraphics[width=0.48\textwidth]{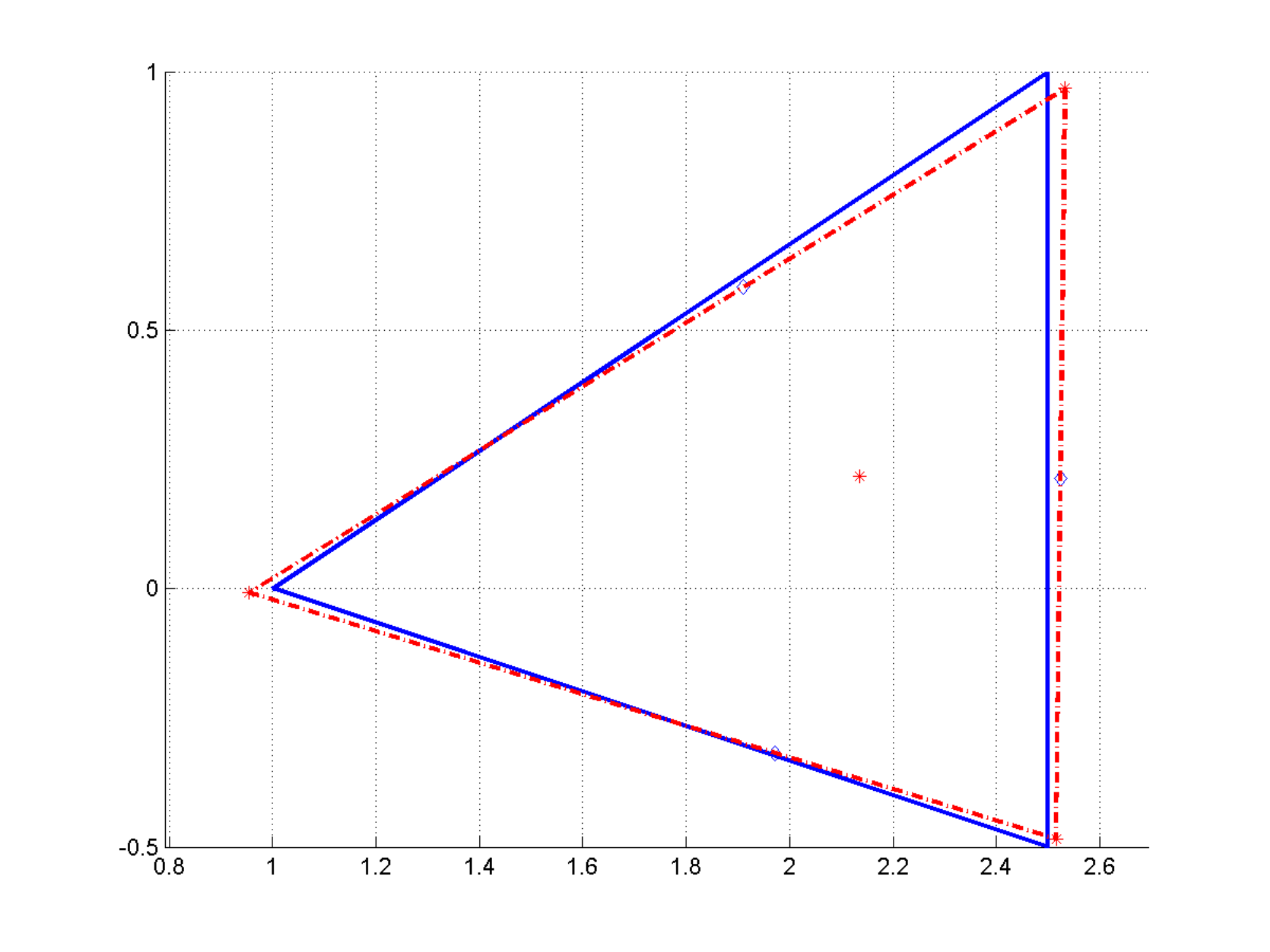}\hfill{} \includegraphics[width=0.48\textwidth]{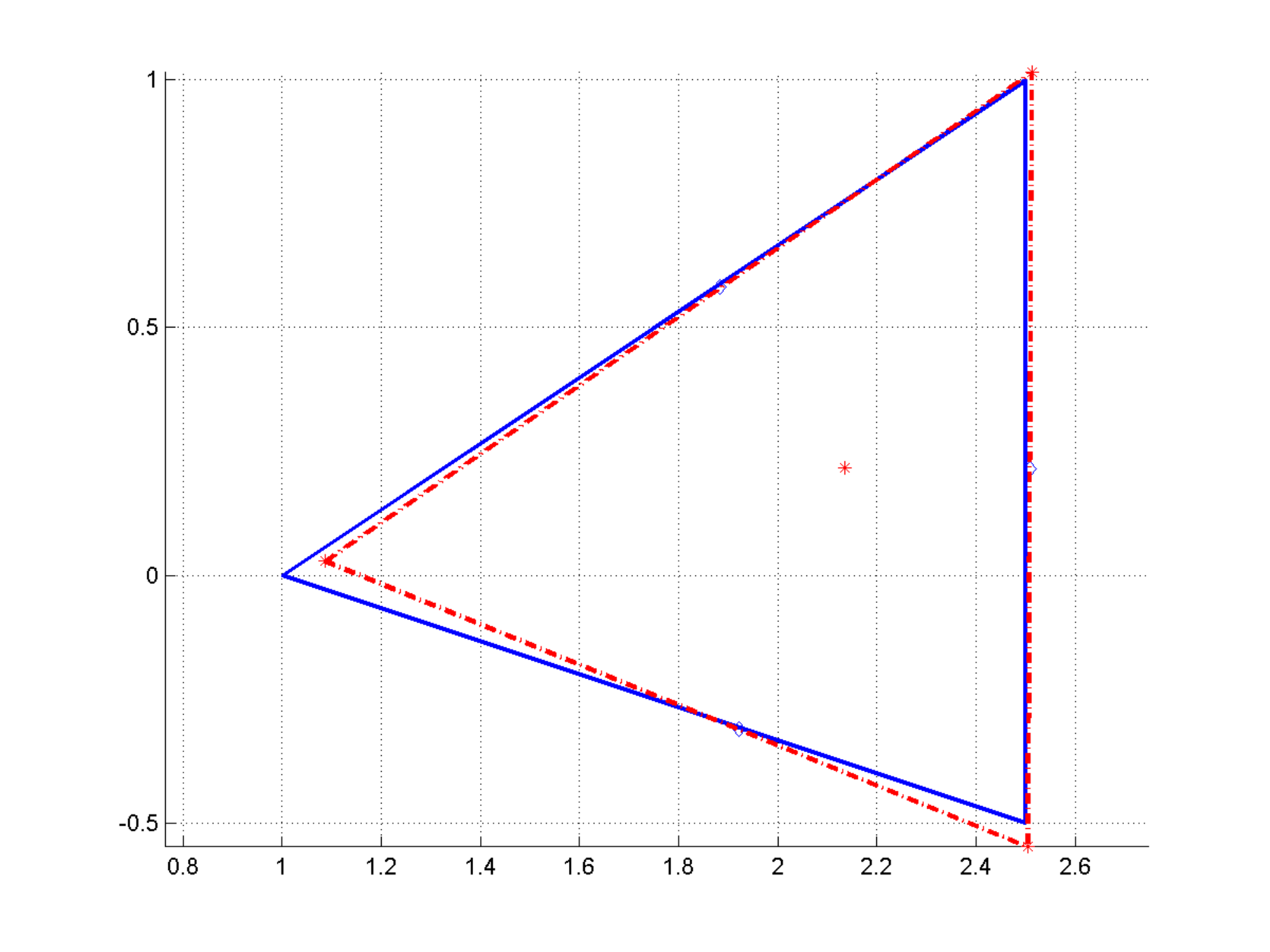}\hfill{}

\hfill{}(a)\hfill{}\hfill{}(b)\hfill{}

\hfill{}\includegraphics[width=0.48\textwidth]{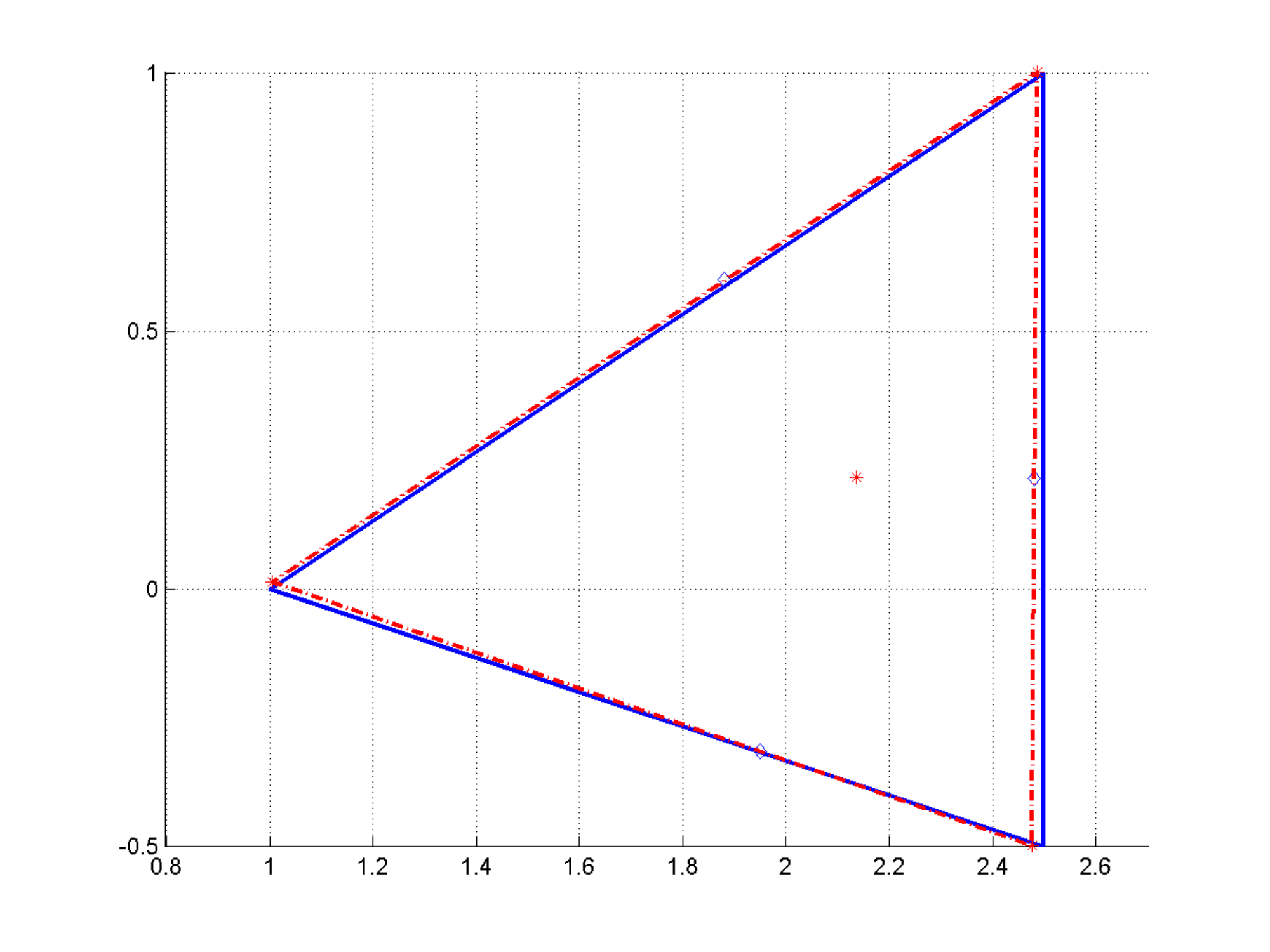}\hfill{} \includegraphics[width=0.48\textwidth]{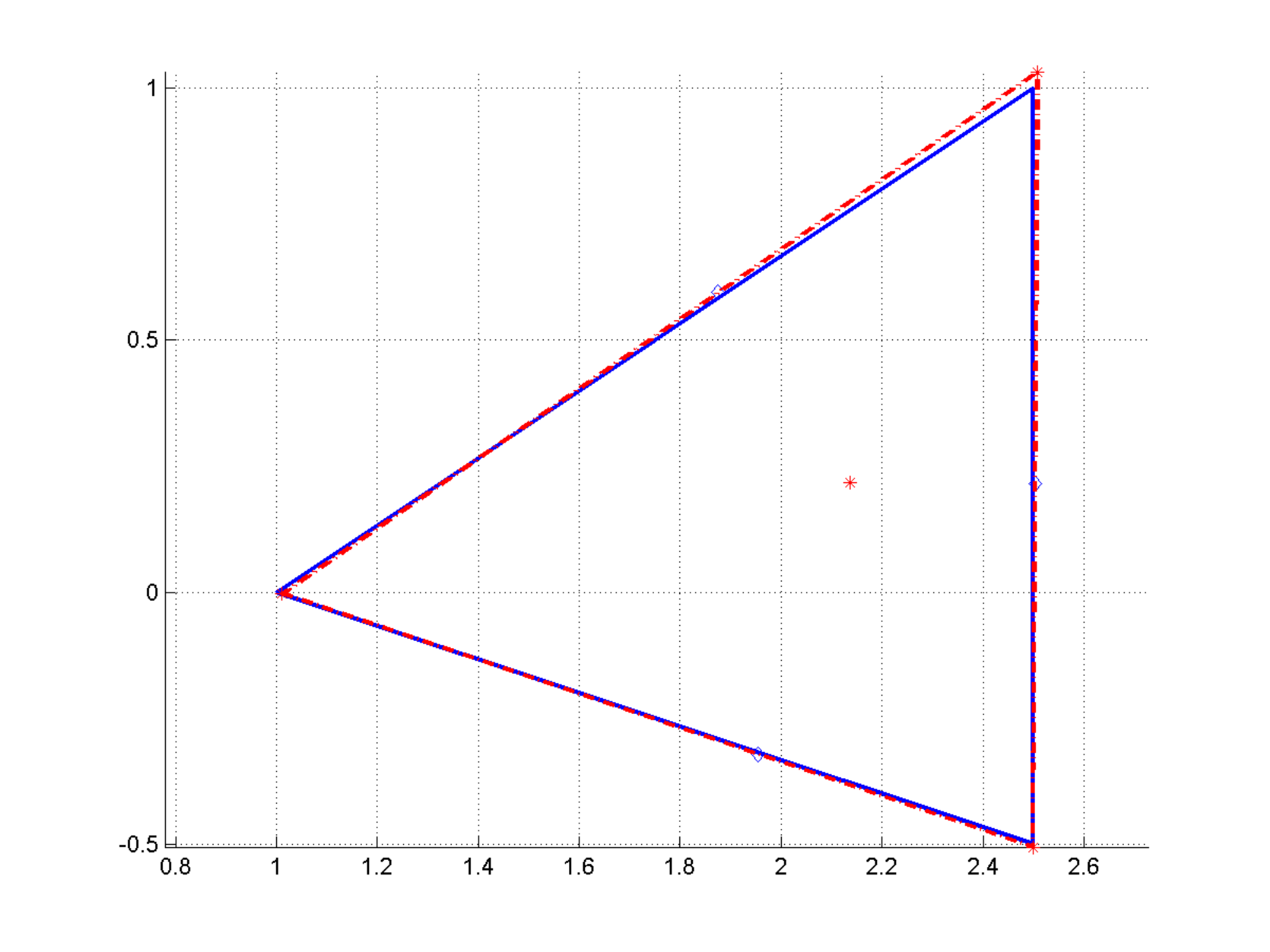}\hfill{}

\hfill{}(c)\hfill{}\hfill{}(d)\hfill{}

\caption{\label{fig:ex1:SS:reconstruction} Reconstruction of the triangular sound-soft scatterer with
(a) $k=6\pi$ with $5\%$ noise,  (b) $k=6\pi$  without noise,
(c) $k=10\pi$ with $5\%$ noise and (d) $k=10\pi$  without noise.  }
\end{figure}

Next, we keep the experimental settings unchanged except replacing the obstacle by a sound-hard triangular scatterer. As before, the location point is detected to be $x_0=(1.9307,\ 0.1412)$ using Scheme I in \cite{LLZ}, denoted by a red star in Fig.~\ref{fig:ex1:SH:reconstruction}. As shown in Fig.~\ref{fig:ex1:SH:ff} we plot the square power of the phaseless far-field data and indicate the first three critical observation angles within the backscattering aperture.
The final reconstruction results are shown in Fig.~\ref{fig:ex1:SH:reconstruction}, which performs as good as in the sound-soft case. The effectiveness of the recovery scheme can be explained in a similar way as in the sound-soft case.

\begin{figure}[tbh]
\centering
\hfill{}\includegraphics[width=0.43\textwidth]{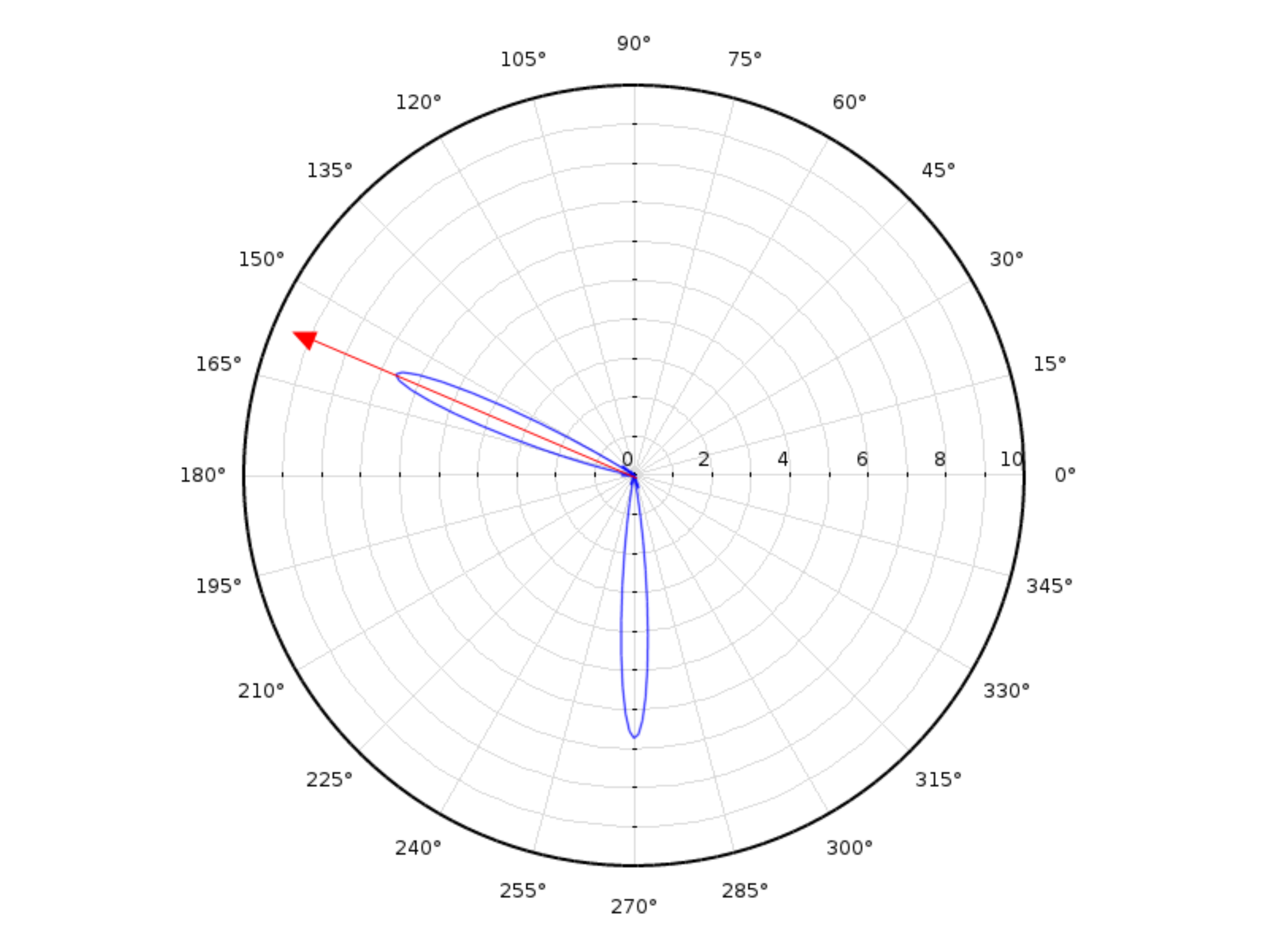}\hfill{} \includegraphics[width=0.43\textwidth]{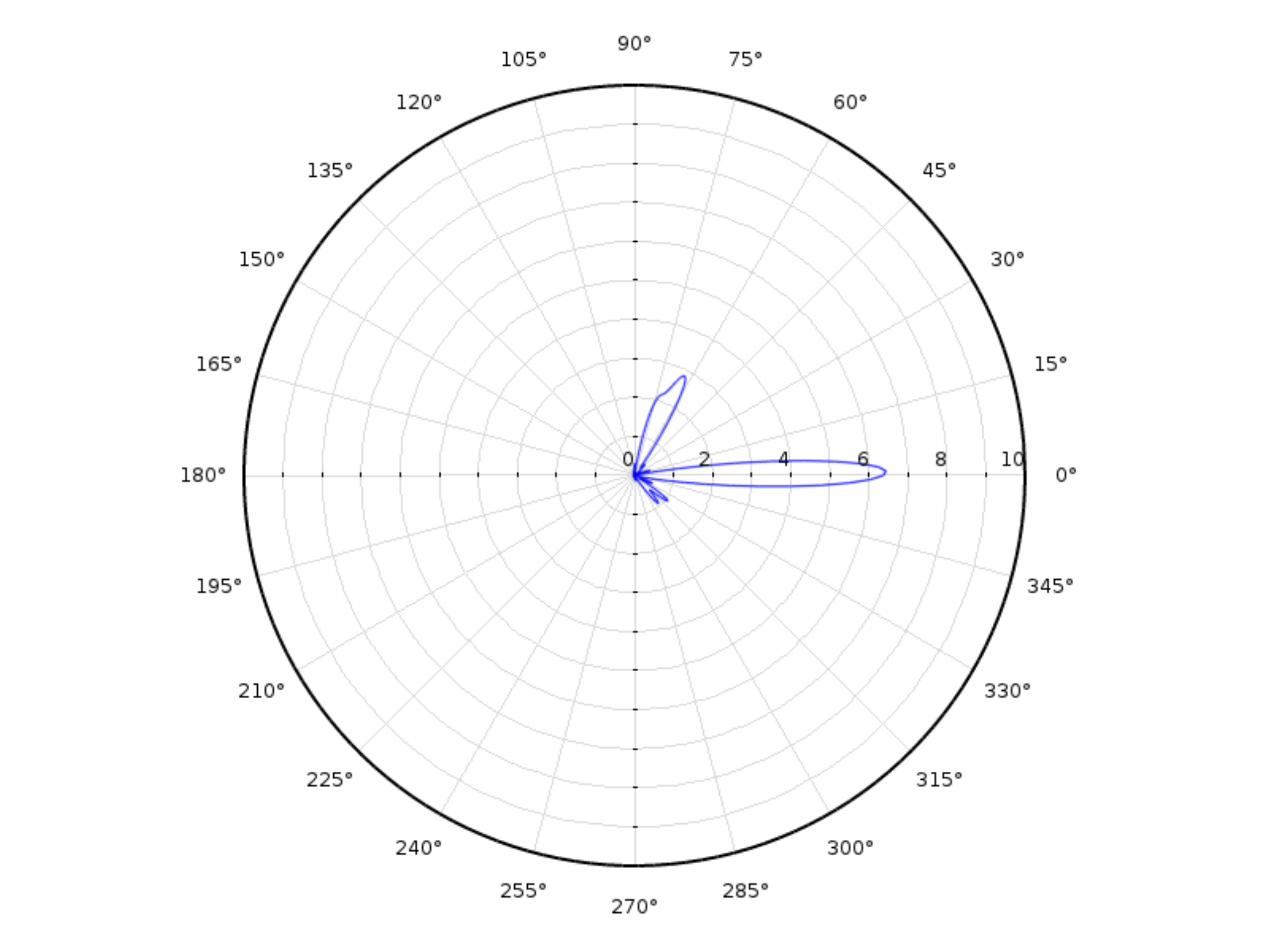}\hfill{}

\hfill{}(a)\hfill{}\hfill{}(b)\hfill{}

\hfill{}\includegraphics[width=0.43\textwidth]{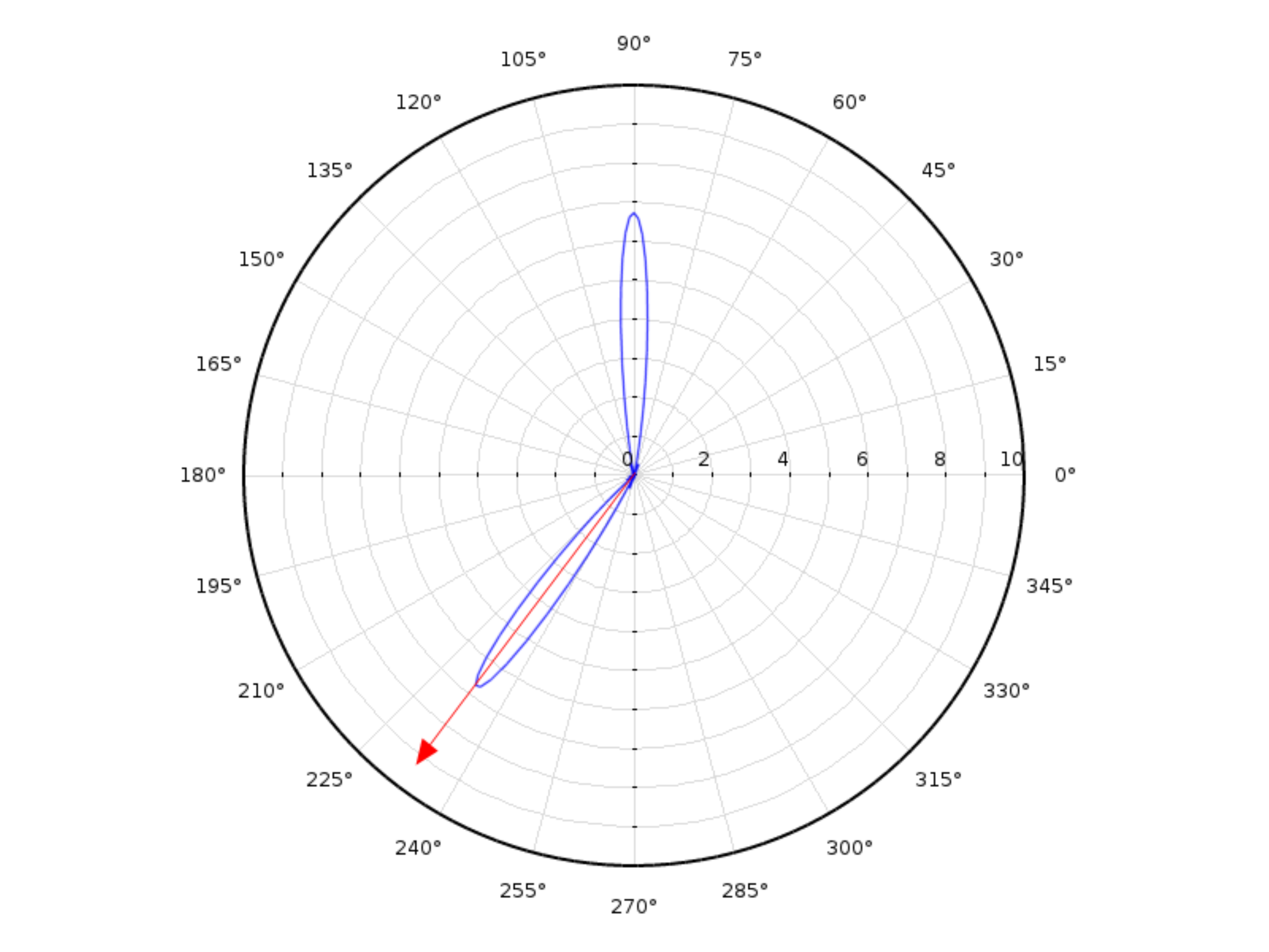}\hfill{} \includegraphics[width=0.43\textwidth]{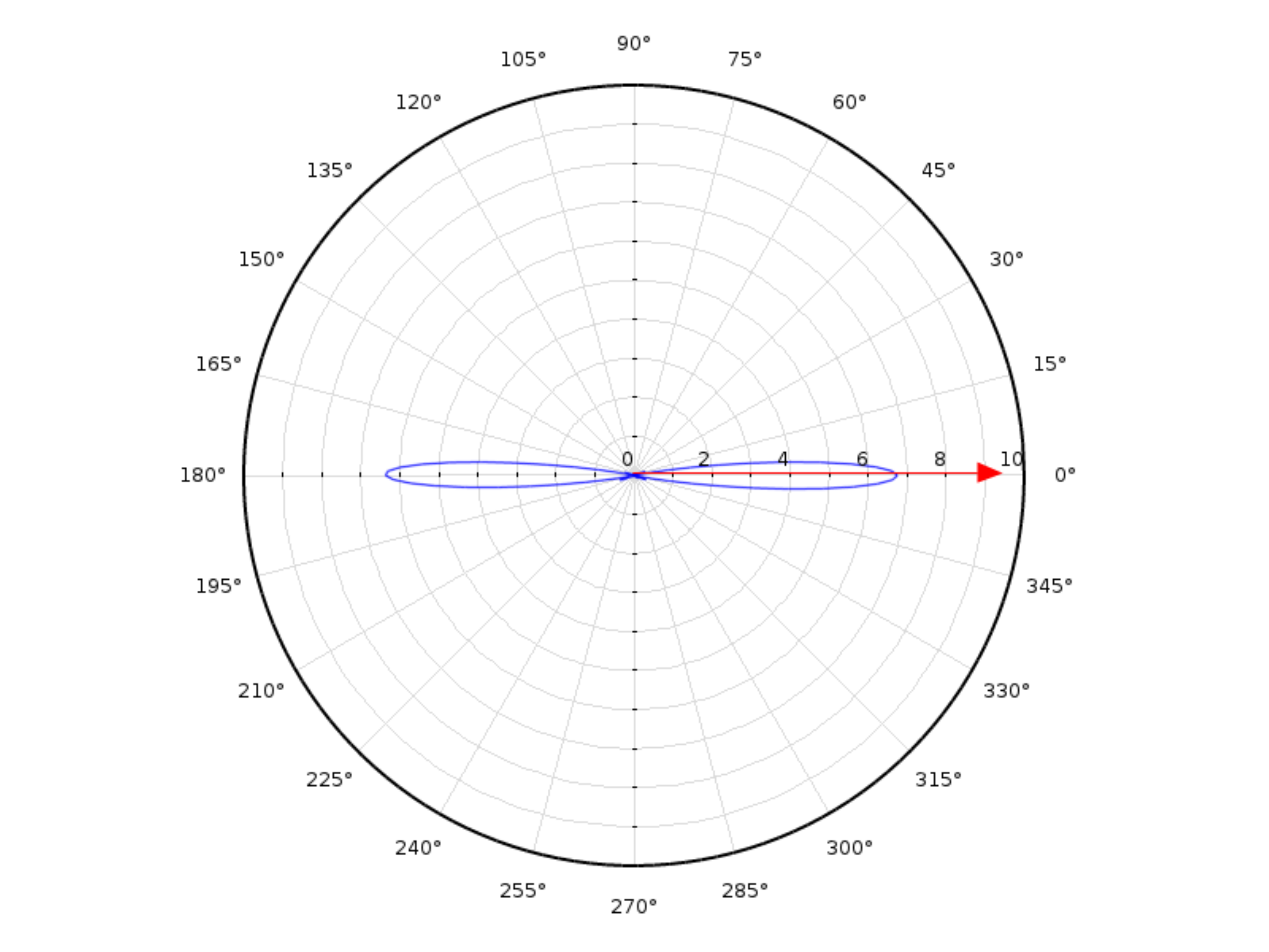}\hfill{}

\hfill{}(c)\hfill{}\hfill{}(d)\hfill{}

\caption{\label{fig:ex1:SH:ff} Plot of $|u^\infty(\hat x)|^2$ in polar coordinates corresponding to a sound-hard triangle due to an incident plane wave $e^{ikx\cdot d}$ with $k=6\pi$ and $d=d_2,d_4,d_6,d_8$ from (a) to (d). The selected critical observation directions $\hat z_1$, $\hat z_2$ and $\hat z_3$ are highlighted by red arrows. }
\end{figure}

\begin{figure}[tbh]
\centering
\hfill{}\includegraphics[width=0.48\textwidth]{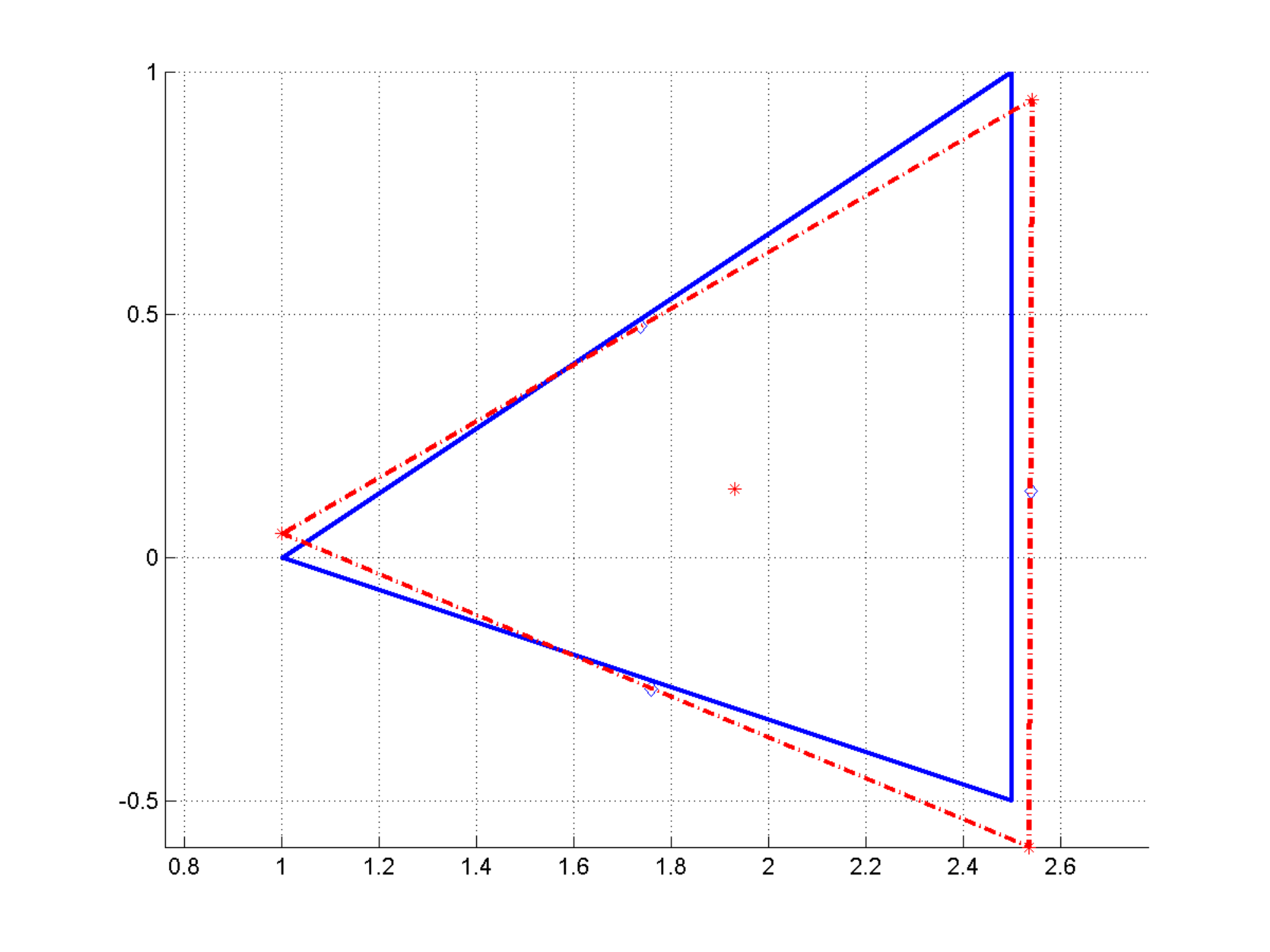}\hfill{} \includegraphics[width=0.48\textwidth]{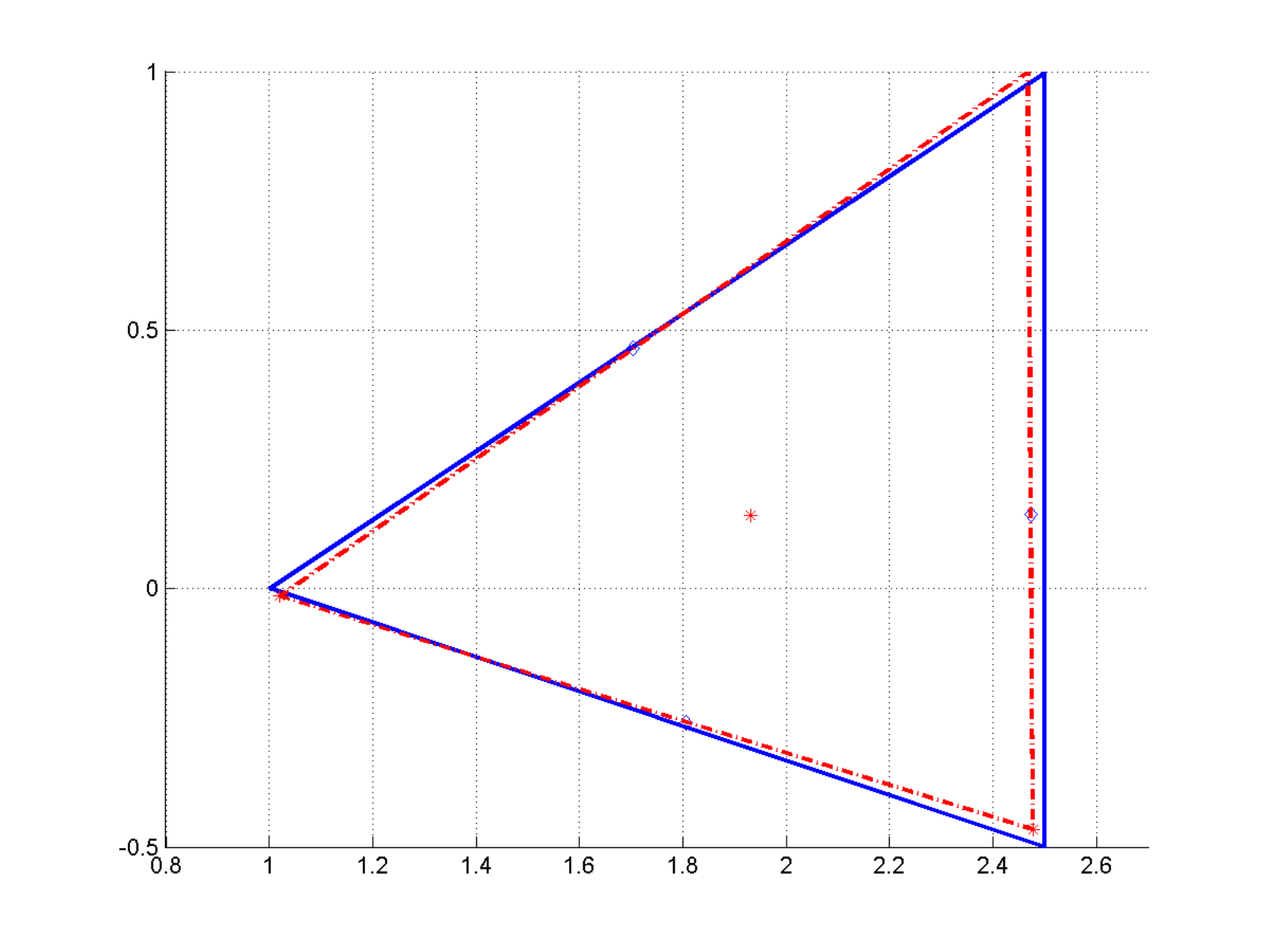}\hfill{}

\hfill{}(a)\hfill{}\hfill{}(b)\hfill{}

\hfill{}\includegraphics[width=0.48\textwidth]{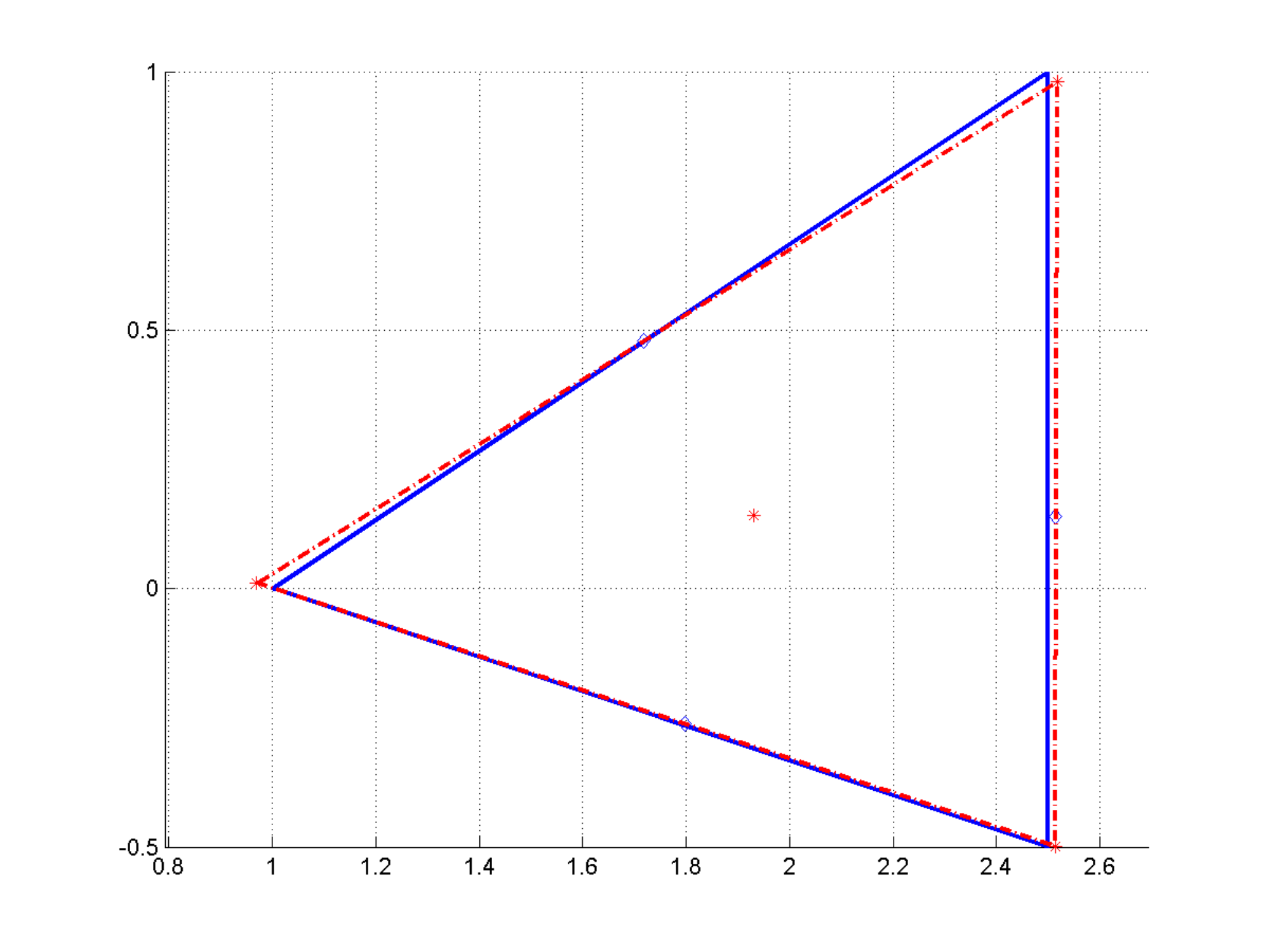}\hfill{} \includegraphics[width=0.48\textwidth]{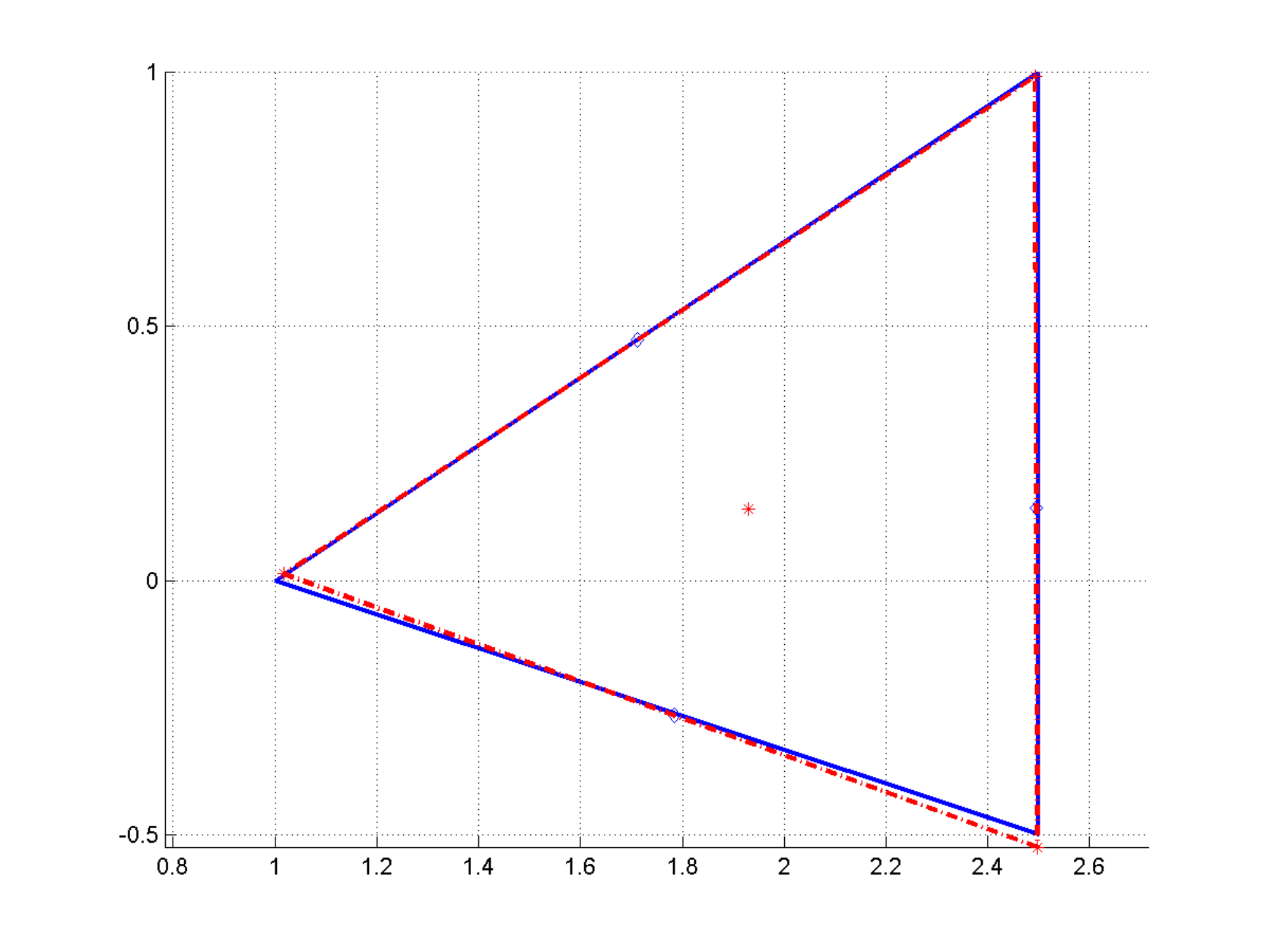}\hfill{}

\hfill{}(c)\hfill{}\hfill{}(d)\hfill{}

\caption{\label{fig:ex1:SH:reconstruction} Reconstruction of the triangular sound-hard scatterer with
(a) $k=6\pi$ with $5\%$ noise, (b) $k=6\pi$ without noise,
(c) $k=10\pi$ with $5\%$ noise, and (d) $k=10\pi$ without noise.}
\end{figure}

\medskip

\noindent {\bf Example 2. A convex hexagon. }

\medskip

In the second example, the obstacle is chosen to be a sound-soft hexagon with six vertices displaced at $(4,2.5)$, $(3,3)$ $(1,2)$, $(0.5,0)$ $(2,-1)$ and $(4.5,-0.5)$, respectively.
The location point is detected to be $x_0=(2.582,\ 0.759)$ using Scheme I in \cite{LLZ}. This example is much more challenging since there are multiple facets to be determined.

It is pointed out that the hexagonal obstacle has six sides and we only send off four detecting waves along incident directions $d=d_1,\ d_3,\ d_5,\ d_7$. In this case, the unknown number of sides is larger than that of incident directions. But from Fig.~\ref{fig:ex2:SS:ff}, one can reveal sufficient critical observation angles from those polar plots. We identify six critical observation angles with relatively large local maximum values  as indicated in red arrows in Fig.~\ref{fig:ex2:SS:ff}.  The final reconstruction results are obtained by solving a six-dimensional nonlinear problem and are shown in Fig.~\ref{fig:ex2:SS:reconstruction}. It is again observed that the reconstruction performs better with higher frequency detecting waves. With the combining effect of increasing sides and noise, our recovery scheme still yield quite reasonable approximation to the original hexagon.

\begin{figure}[tbh]
\centering
\hfill{}\includegraphics[width=0.43\textwidth]{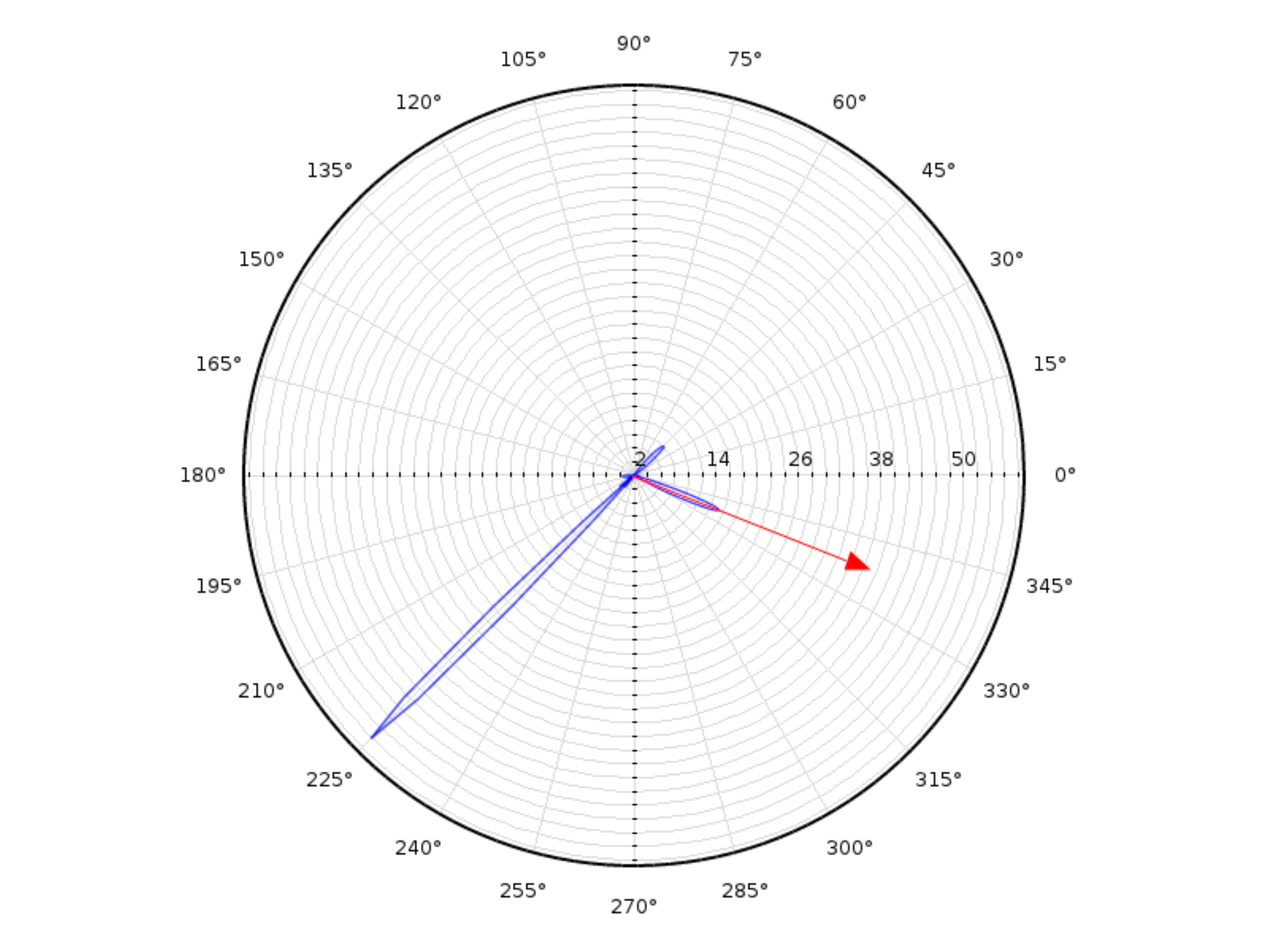}\hfill{} \includegraphics[width=0.43\textwidth]{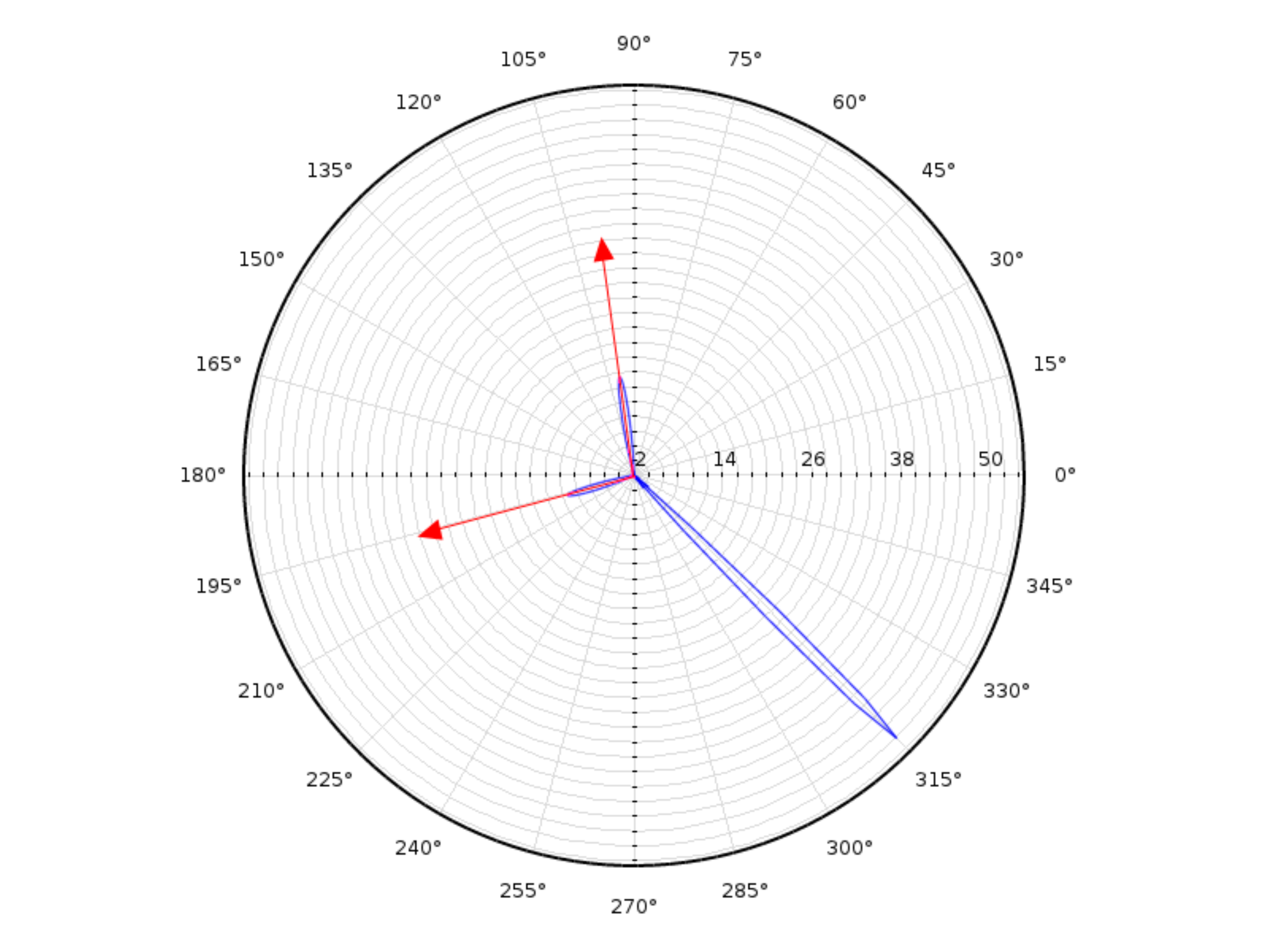}\hfill{}

\hfill{}(a)\hfill{}\hfill{}(b)\hfill{}

\hfill{}\includegraphics[width=0.43\textwidth]{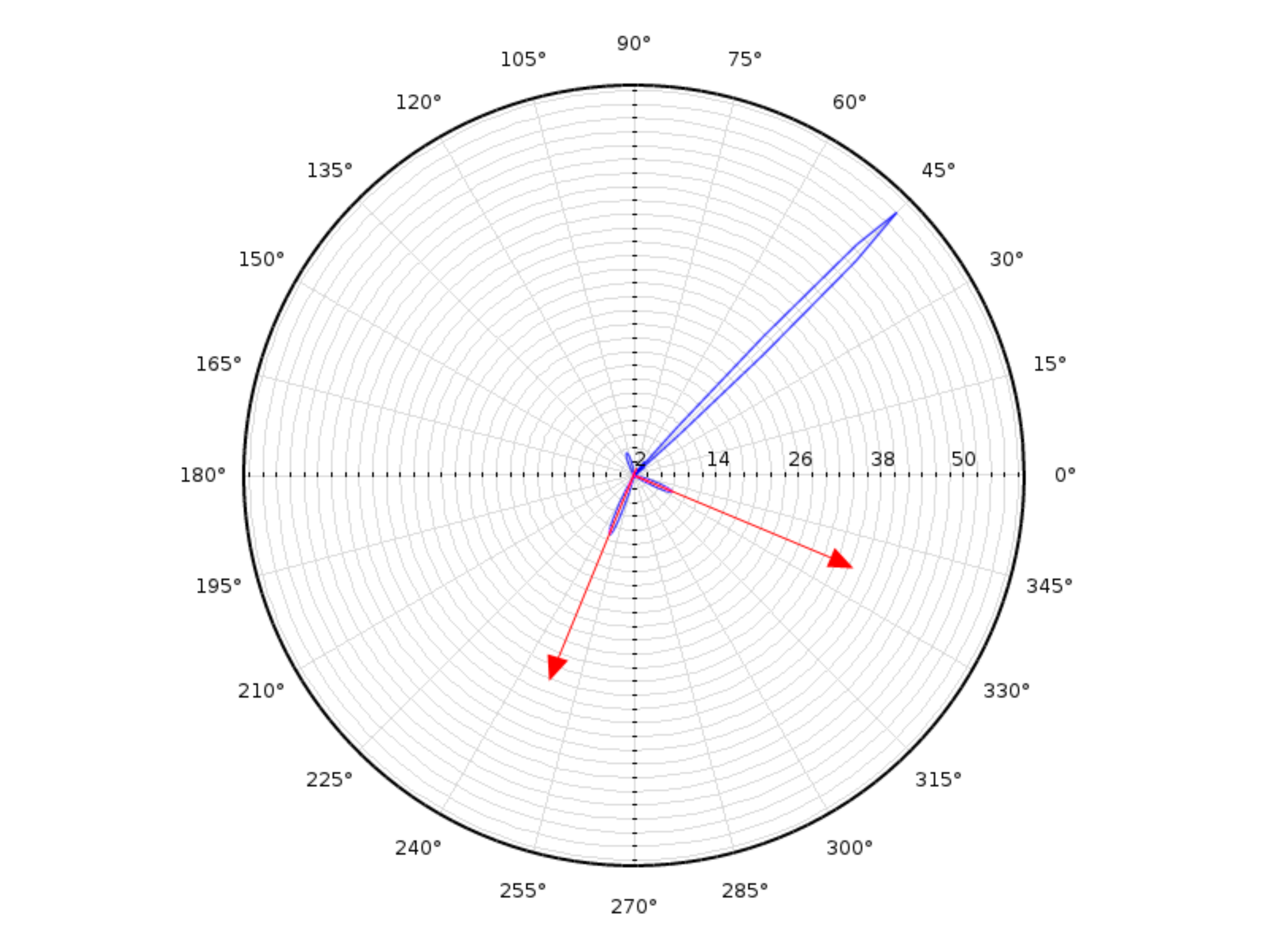}\hfill{} \includegraphics[width=0.43\textwidth]{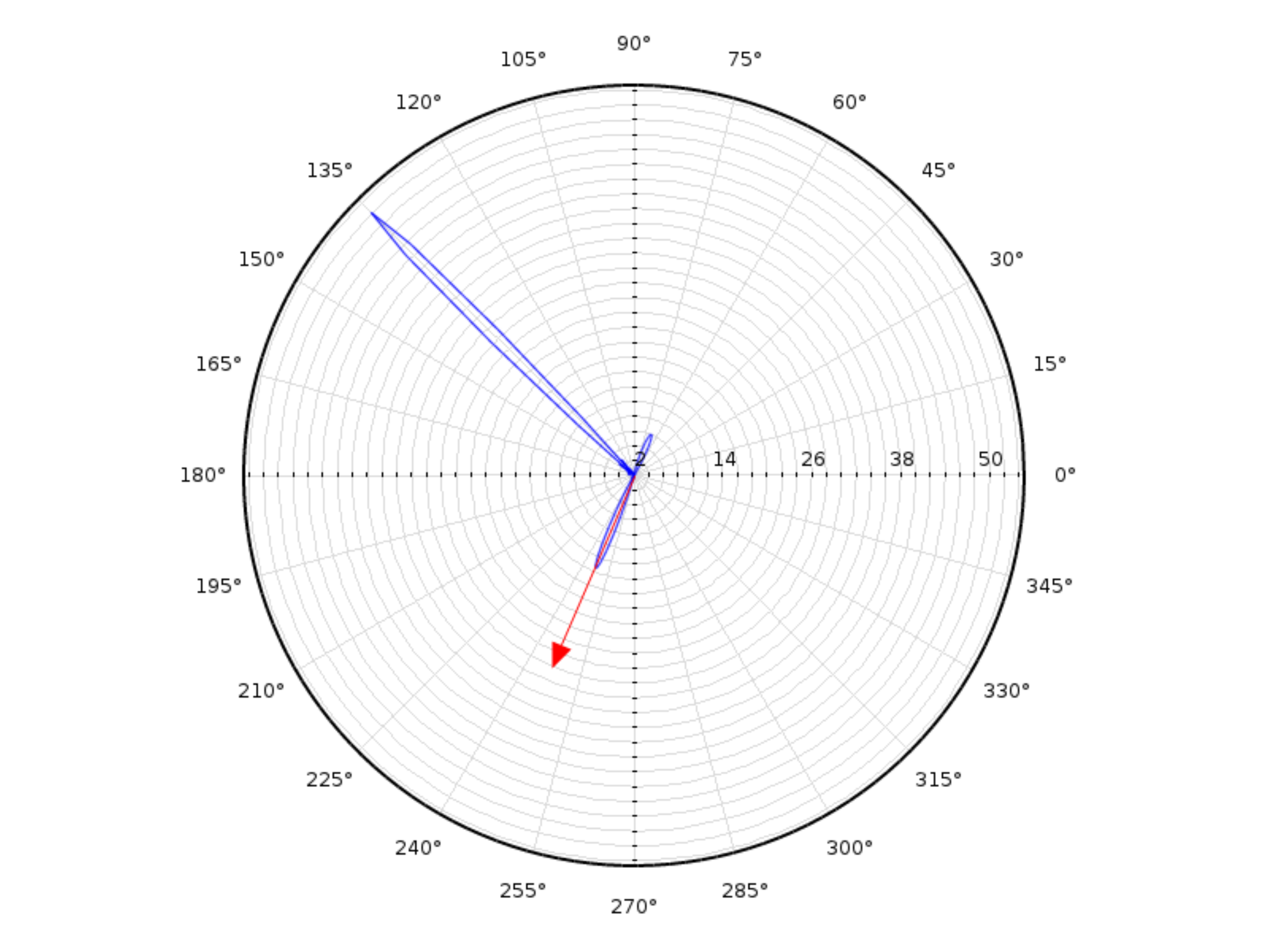}\hfill{}

\hfill{}(c)\hfill{}\hfill{}(d)\hfill{}

\caption{\label{fig:ex2:SS:ff} Plot of $|u^\infty(\hat x)|^2$ in polar coordinates corresponding to a convex sound-soft hexagon due to an incident plane wave $e^{ikx\cdot d}$ with  $k=6\pi$ and $d=d_1,d_3,d_5,d_7$ from (a) to (d). The selected critical observation directions $\hat z_1$, $\hat z_2$, $\hat z_3$, $\hat z_4$, $\hat z_5$ and $\hat z_6$ are highlighted by red arrows. }
\end{figure}

\begin{figure}[tbh]
\centering
\hfill{}\includegraphics[width=0.48\textwidth]{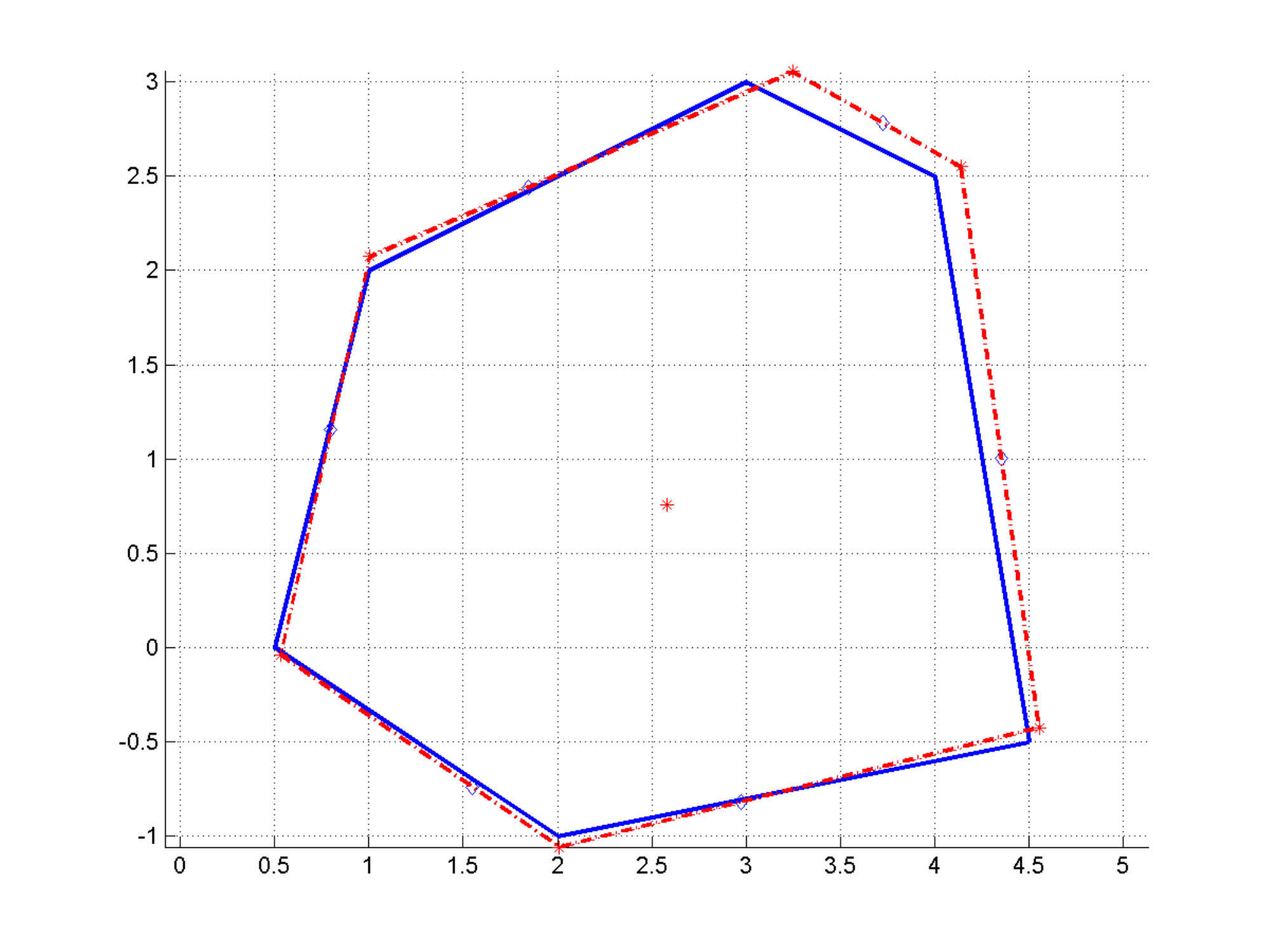}\hfill{} \includegraphics[width=0.48\textwidth]{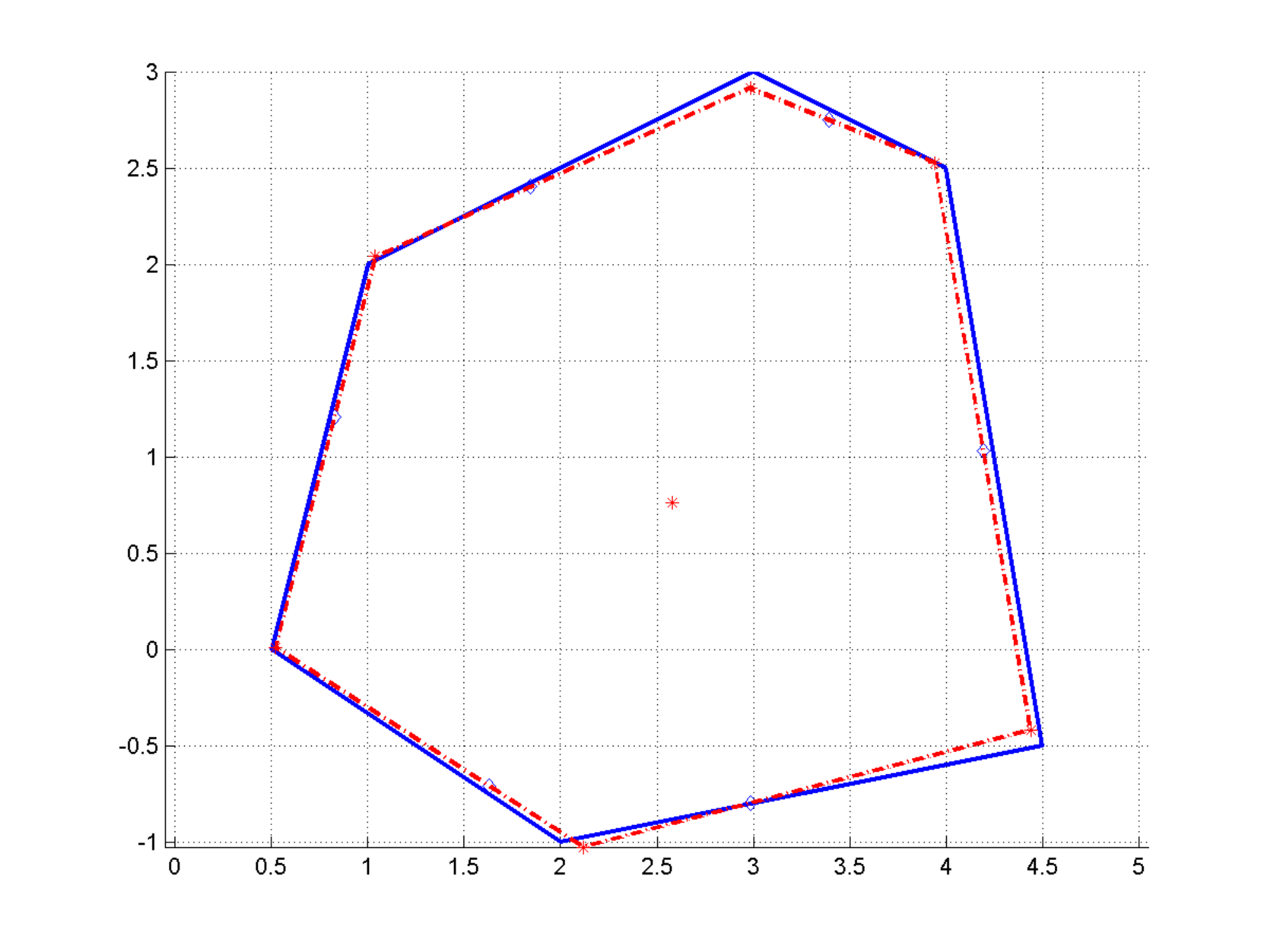}\hfill{}

\hfill{}(a)\hfill{}\hfill{}(b)\hfill{}

\hfill{}\includegraphics[width=0.48\textwidth]{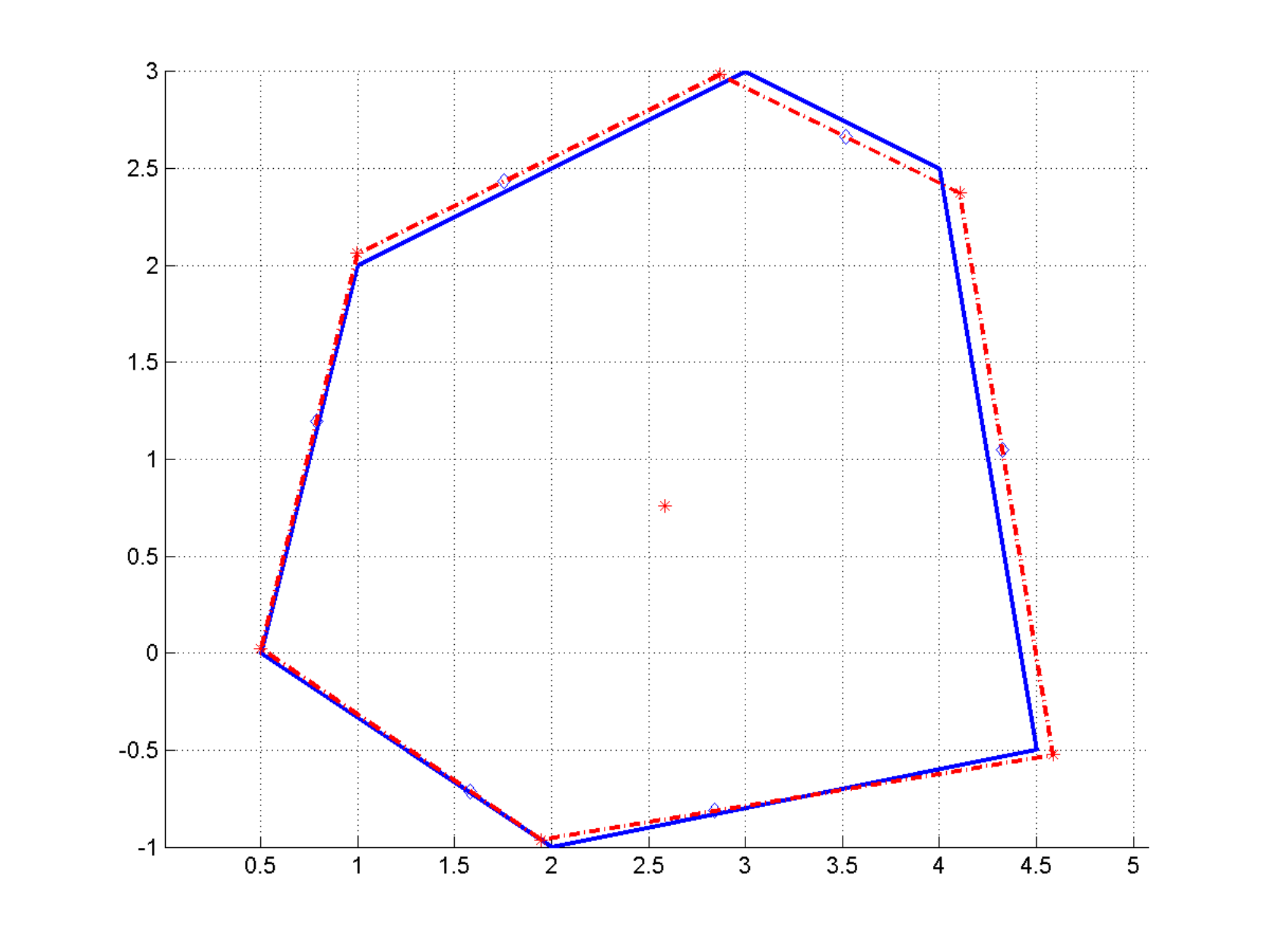}\hfill{} \includegraphics[width=0.48\textwidth]{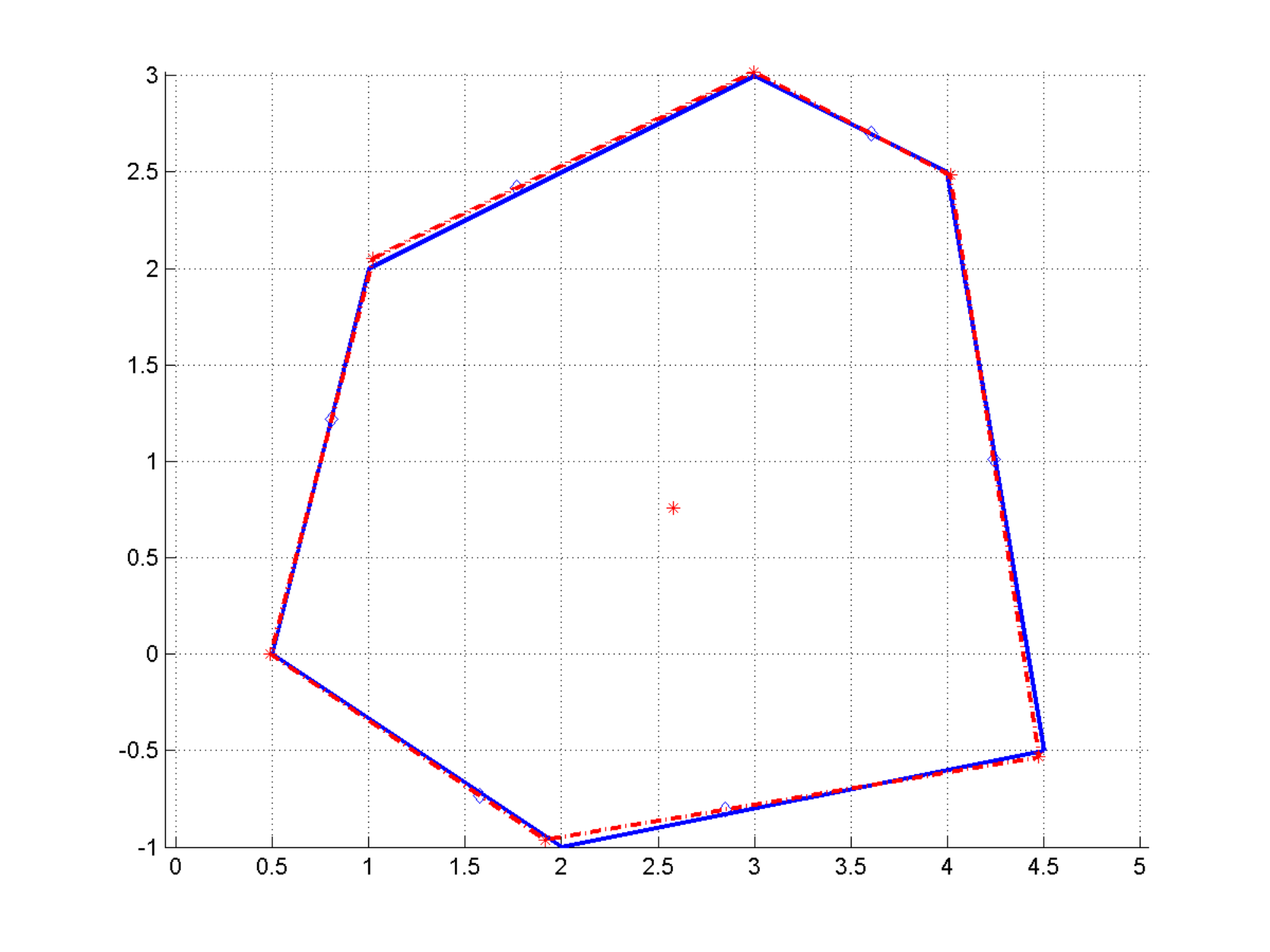}\hfill{}

\hfill{}(c)\hfill{}\hfill{}(d)\hfill{}

\caption{\label{fig:ex2:SS:reconstruction} Reconstruction of the sound-soft hexagonal obstacle with
(a) $k=6\pi$ with $5\%$ noise, (b) $k=6\pi$ without noise,
(c) $k=10\pi$ with $5\%$ noise, and (d) $k=10\pi$ without noise.  }
\end{figure}

\section{Concluding remarks}

In this paper, we develop an inverse scattering scheme of recovering a sound-hard or sound-soft polyhedral obstacle by only a few backscattering far-field measurements. It has been a very challenging issue in the literature on recovering an obstacle by minimum measurement data. We believe that the results in this work make some important contribution to this challenging issue. The proposed scheme proceeds with two steps. First, one uses the local maximum behavior of the modulus of the backscattering far-field data to determine the exterior normal direction of each of the side/face of the obstacle. Then, one can solve a small-scale finite dimensional algebraic problem to completely recover the obstacle. In order to justify the local maximum behavior of the modulus of the far-field data, we made essential use of the high-frequency asymptotics of the acoustic scattering. Our method can be extended to recovering non-convex obstacles, as well as to the electromagnetic scattering problems, which we shall report in the forthcoming work.

\section*{Acknowledgments}
This work was supported by the NSF of China under the grant No.\, 11201453 and No.\,11371115, and the FRG and startup funds from the Hong Kong Baptist University.

\clearpage


\begin{thebibliography}{99}


\bibitem{Ammari4} {H.~Ammari and H.~Kang}, {\it Reconstruction of Small Inhomogeneities from Boundary Measurements}, Lecture Notes in Mathematics, 1846. Springer-Verlag, Berlin Heidelberg, 2004.

\bibitem{AR} {G. Alessandrini and L. Rondi}, {\it Determining a sound-soft polyhedral scatterer by a single far-field measurement}, Proc. Amer. Math. Soc., {\bf 35} (2005), 1685--1691. Corrigendum: Preprtint arXiv math.Ap/0601406, 2006.

\bibitem{CWL} {S. N. Chandler-Wilde and S. Langdon}, {\it Acoustic scattering: high frequency boundary element methods and unified transform methods}, arxiv:1410.6137

\bibitem{CK} {D. Colton and R. Kress}, {\it Inverse Acoustic and Electromagnetic Scattering Theory}, 2nd Edition, Springer-Verlag, Berlin, 1998.

\bibitem{CS} {D. Colton and B. D. Sleeman}, {\it Uniqueness theorems for the inverse problem of acoustic scattering}, IMA J. Appl. Math., {\bf 31} (1983), 253--259.

\bibitem{CSV} {A.~R.~Conn, K. Scheinberg and L. N. Vicente}, {\it Introduction to Derivative-Free Optimization}, SIAM, Philadelphia, PA, 2009.

\bibitem{ElsYam} {J. Elschner and M. Yamamoto}, {\it Uniqueness in determining polygonal sound-hard obstacles with a single incoming wave}, Inverse Problems, {\bf 22} (2006), 355.

\bibitem{HLM} {D. P. Hewett, S. Langdon, and J. M. Melenk}, {\it A high frequency
hp boundary element method for scattering by convex polygons}, SIAM
J. Numer. Anal., {\bf 51} (2013), 629--653.

\bibitem{HonNakSin} {N. Honda, G. Nakamura and M. Sini}, {\it Analytic extension and reconstruction of obstacles from few measurements for elliptic second order operators}, Math. Ann., {\bf 355} (2013), 401--427.


\bibitem{Isa2} {V. Isakov}, {\it Inverse Problems for Partial Differential Equations}, 2nd edition, Applied Mathematical Sciences, 127, Springer-Verlag, New York, 2006.


\bibitem{LaxPhi} {P. D. Lax and R. S. Phillips}, {\it Scattering Theory}, Academic Press, 1967.


\bibitem{LLSS}
{ J. Li,  H. Liu,  Z. Shang and H. Sun},
{\it Two single-shot methods for locating multiple electromagnetic scatterers},
SIAM J. Appl. Math., {\bf 73} (2013), 1721--1746.


\bibitem{LLZ}
{J. Li, H. Liu and J. Zou},
{\it Locating multiple multiscale acoustic scatterers},
SIAM Multiscale Model. Simul., \textbf{12} (2014), 927--952.

\bibitem{Liu1} {H. Liu and J. Zou}, {\it Uniqueness in an inverse acoustic obstacle scattering problem for both sound-hard and sound-soft polyhedral scatterers}, Inverse Problems, {\bf 22} (2006), 515--524.


\bibitem{Kli} {M. Klibanov}, {\it Phaseless inverse scattering problems in three dimensions}, SIAM J. Appl. Math., {\bf 74} (2014), 392--410.

\bibitem{Maj} {A. Majda}, {\it High frequency asymptotics for the scattering matrix and the inverse problem
of acoustical scattering}. Comm. Pure Appl. Math., {\bf 29} (1976), 261--291.

\bibitem{McL} {W. McLean}, {\it Strongly Elliptic Systems and Boundary Integral Equations}, Cambridge University Press, 2000.

\bibitem{MelTay} {R. B. Melrose and M. E. Taylor}, {\it Near peak scattering and the corrected Kirchhoff
approximation for a convex obstacle}, Adv. Math., {\bf 55} (1985), 242--315.



\end{thebibliography}
\end{document}